\newtheorem{theorem}{Theorem}[section]
\newtheorem{lemma}[theorem]{Lemma}
\newtheorem{proposition}{Proposition}[section]
\theoremstyle{definition}
\newtheorem{definition}[theorem]{Definition}
\newtheorem{remark}[theorem]{Remark}
\newcommand{\R}{\mathbb R}
\newcommand{\T}{\mathbb T}
\newcommand{\N}{\mathbb N}
\newcommand{\bq}{\begin{equation}}
\newcommand{\eq}{\end{equation}}
\newcommand{\e}{\varepsilon}
\newcommand{\lt}{\left}
\newcommand{\rt}{\right}
\newcommand{\mc}{\mathcal{C}}
\newcommand{\pa}{\partial}
\newcommand{\intr}{\int_{\R^3}}
\newcommand{\mv}{\mathcal{V}}
\numberwithin{equation}{section}
\begin{document}

\title[Global existence of weak solutions for Navier-Stokes-BGK system]{Global existence of weak solutions for Navier-Stokes-BGK system}

\author[Choi]{Young-Pil Choi}
\address[Young-Pil Choi]{\newline Department of Mathematics and Institute of Applied Mathematics\newline
Inha University, Incheon 402-751, Republic of Korea}
\email{ypchoi@inha.ac.kr}

\author[Yun]{ SEOK-BAE YUN }
\address[Seok-Bae Yun]{\newline Department of Mathematics\newline Sungkyunkwan University, Suwon 440-746, Republic of Korea}
\email{sbyun01@skku.edu}



\keywords{Vlasov equation, BGK model, incompressible Navier-Stokes equations, spray models, global existence of weak solutions}

\begin{abstract} In this paper, we study the global well-posedness of a coupled system of kinetic and fluid equations. More precisely, we establish the global existence of weak solutions for Navier-Stokes-BGK system consisting of the BGK model of Boltzmann equation and incompressible Navier-Stokes equations coupled through a drag forcing term. This is achieved by combining weak compactness of the particle interaction operator based on Dunford-Pettis theorem, strong compactness of macroscopic fields of the kinetic part relied on velocity averaging lemma and a high order moment estimate, and strong compactness of the fluid part by Aubin-Lions lemma. 

\end{abstract}

\maketitle \centerline{\date}

\section{Introduction}

In this paper, we address the existence of weak solutions for a particle-fluid system in which the BGK model of Boltzmann equation and the incompressible Navier-Stokes equations are coupled through a drag force:
\begin{align}\label{main_sys}
\begin{aligned}
&\hspace{0.2cm}\pa_t f + v \cdot \nabla_x f + \nabla_v \cdot ((u-v)f) = \mathcal{M}(f) - f ,\cr
&\pa_t u + u\cdot\nabla_x u + \nabla_x p - \mu\Delta_x u= - \int_{\R^3} (u-v) f\,dv,\cr
&\hspace{2.6cm}\nabla_x\cdot u = 0,\cr
\end{aligned}
\end{align}
subject to initial data
\[
\lt(f(x,v,0), u(x,0)\rt) =: (f_0(x,v), u_0(x)). 
\]
Here $f(x,v,t)$ denotes the number density function on the phase point $(x,v) \in \T^3 \times \R^3$ at time
$t\in \R_+$, and $u(x,t)$ and $p(x,t)$ are the fluid velocity and the hydrostatic pressure on $x\in \mathbb{T}^3$ at time $t\in \R_+$, respectively. $\mu$ is the kinematic viscosity of the fluid.
The local Maxwellian $\mathcal{M}(f)$ is defined by
\[
\mathcal{M}(f)(x,v,t) = \frac{\rho_f(x,t)}{\sqrt{(2\pi T_f(x,t))^{3}}}\exp\lt(- \frac{|v - U_f(x,t)|^2}{2T_f(x,t)}\rt),
\]
where the macroscopic fields of $f$: $\rho_f, U_f$, and $T_f$ are defined by
\begin{align*}
\rho_f(x,t) &:= \int_{\R^3} f(x,v,t)\,dv, \cr
\rho_f(x,t) U_f(x,t) &:= \int_{\R^3} vf(x,v,t)\,dv,\cr
3\rho_f(x,t)T_f(x,t) &:= \int_{\R^3} |v-U_f(x,t)|^2 f(x,v,t)\,dv,
\end{align*}
respectively. These relations give the following cancellation properties:
\begin{align*}
\int_{\mathbb{R}^3}\big\{\mathcal{M}(f)-f\big\}\left(1, v, |v|^2\right)dv=0.
\end{align*}
Note that this provides the conservation of mass, momentum and energy for the BGK model. However, in our coupled model (\ref{main_sys}), this
only leads to conservation of mass. 

The most general model to describe the dynamics of rarefied particles suspended in a fluid is the Navier-Stokes-Boltzmann system coupled through the drag force term. Due to various technical difficulties, however, the global-in-time existence of solutions for such model is currently not available.  In this paper, we consider the case in which the  interactions between the particles are described by  the nonlinear relaxation operator of the BGK model. This is meaningful in the following two senses.

First, the BGK model is one of the most widely used model equation of the Boltzmann equation in physics and engineering. This is due to the qualitatively reliable results produced by the BGK model at much lower computational cost compared to that of the Boltzmann equation.


Secondly, even though existence theories for particle-fluid systems are well studied nowadays, most of the results dealing with the interactions between the suspended particles consider the linear interaction operators. To the best knowledge of the authors, our result seems to be the first result to consider the particle-fluid model with a nonlinear collision operator for particle interactions.\newline


{\bf History 1: Navier-Stokes-Vlasov system:}
Recently, the study on particle-fluid system is gathering a lot of attentions due to their applications, for example, in the study of sedimentation phenomena, fuel injector in engines, and compressibility of droplets of the spray, etc \cite{BBJM, BDM03, Rourke, RM, VASG, Will}. Along with that applicative interest, the mathematical analysis for various modelling is also emphasized. 
In the case when  the direct particle-particle interactions are absent, there are a number of literature on the global existence of solutions; weak solutions for Vlasov or Vlasov-Fokker-Planck equation coupled with homogeneous/inhomogeneous fluids are studied in \cite{CCK14, CK15, Ham98, MV, WY14, Yu13}, strong solutions near a global Maxwellian for Vlasov-Fokker-Planck equation coupled with incompressible/compressible Euler system are obtained in \cite{CDM11, CKL, DL}. We also refer to \cite{Choi16, Choi17} for the large-time behavior of solutions and finite-time blow-up phenomena in kinetic-fluid systems. Despite those fruitful developments on the existence theory, to the best knowledge of the authors, global existence of solutions for kinetic-fluid models where collisions between the particles are taken into account has not been studied so far. It is worth mentioning that the local-in-time smooth solutions for the Vlasov-Boltzmann/compressible Euler equations are studied in \cite{Math10} and the global existence of weak solutions of Vlasov/incompressible Navier-Stokes equations with a linear particle interaction operator taking care of the breakup phenomena is established in \cite{BDM14, YY}. In \cite{BCHK12, CL16}, Vlasov/Navier-Stokes system with a nonlinear particle interaction operator describing an asymptotic velocity alignment behavior is considered and the global existence of weak solutions is obtained.\newline

{\bf History 2: BGK model:}  In spite of its important role as a fundamental model connecting the particle level description and the fluid level description of gaseous systems, the applications of the Boltzmann equation at the physical or engineering level have often been limited by the high numerical cost involved in the numerical computations of the collision operator. This is especially so if one is interested in dealing with specific flow problems.
Looking for a model equation that shares important features of the Boltzmann equation, and therefore, successfully mimics the dynamics of the Boltzmann equation, Bhatnagar et al, and independently Walender, introduced a relaxation model of the Boltzmann equation, which is called the BGK model. Since then, the BGK model has seen a wide range of applications in engineering and physics due to its reliable results at much lower computational cost compared to that of the Boltzmann equation.

The mathematical study of the BGK model can be traced back to \cite{P} where Perthame established the existence of weak solutions. Perthame and Pulvirenti later studied the existence of unique mild solution in a weighted $L^{\infty}$ space \cite{PP}. These works were fruitfully extended to several directions: gases in the presence of external forces \cite{WZ}, plasma \cite{Z-VP1,Z-VP2}, solutions in $L^p$ spaces \cite{ZH}, ellipsoidal extension \cite{Y} and gas mixture problems \cite{KP}. The existence of classical solutions and its exponential stabilization near equilibrium are studied in \cite{Y JMP,Y SIAM}.
The results on the stationary problems in a slab can be found in \cite{BY,Ukai BGK}. BGK model also saw various applications in the study of various macroscopic limits \cite{DMOS,LT,Mellet,MMM,SR,SR1,SR2}. We omit the survey on
the numerical computations related to the BGK model, interested readers may refer to \cite{CP,DP,FS,M,PiPu,RSY}.

\subsection{Main result} Before we define our solution concept and state the main result, we  define
norms, function spaces and notational conventions.
\begin{itemize}
\item We  denote by $C$ a generic, not necessarily identical, positive constant.
\item For functions $f(x,v), g(x)$, $\|f\|_{L^p}$ and $\|g\|_{L^p}$ denote the usual $L^p(\T^3 \times \R^3)$-norm and $L^p(\T^3)$-norm, respectively.
\item $\|f\|_{L_q^\infty}$ represents a weighted $L^{\infty}$-norm:
\[
\|f\|_{L_q^\infty} := ess\sup_{x,v}(1+|v|^q)f(x,v).
\]
\item For any nonnegative integer $s$, $H^s$ denotes the $s$-th order $L^2$ Sobolev space.
\item $\mc^s([0,T];E)$ is the set of $s$-times continuously differentiable functions from an interval $[0,T]\subset \R$ into a Banach space $E$, and $L^p(0,T;E)$ is the set of the $L^p$ functions from an interval $(0,T)$ to a Banach space $E$.
\end{itemize}
In order to state our main theorem on the global existence of weak solutions to the system \eqref{main_sys}, we also introduce functions spaces as follows:
\[
\mathcal{H}:= \{w \in L^2(\T^3) : \nabla_x \cdot w = 0 \} \quad \mbox{and} \quad \mathcal{V}:=\{w \in H^1(\T^3) : \nabla_x \cdot w =0\}.
\]
We then define a notion of weak solutions to the system \eqref{main_sys}.
\begin{definition}\label{def_weak}We say that $(f,u)$ is a weak solution to the system \eqref{main_sys} if the following conditions are satisfied:
\begin{itemize}
\item[(i)] $f \in L^\infty(0,T;(L^1_+ \cap L^\infty)(\T^3 \times \R^3))$,
\item[(ii)] $u \in L^\infty(0,T;\mathcal{H}) \cap L^2(0,T;\mathcal{V}) \cap \mc^0([0,T];\mathcal{V}')$,
\item[(iii)] for all $\phi \in \mc^1_c(\T^3 \times \R^3 \times [0,T])$ with $\phi(x,v,T) = 0$,
$$\begin{aligned}
&-\int_{\T^3 \times \R^3} f_0\phi_0\,dxdv - \int_0^T\int_{\T^3 \times \R^3} f\lt( \pa_t \phi + v \cdot \nabla_x \phi + (u-v)\cdot \nabla_v \phi\rt)dxdvdt\cr
&\hspace{3cm}= \int_0^T \int_{\T^3 \times \R^3} \lt(\mathcal{M}(f) - f\rt)\phi\,dxdvdt,
\end{aligned}$$
\item[(iv)] for all $\psi \in \mc^1_c(\T^3 \times [0,T])$ with $\nabla_x \cdot \psi = 0$ for almost all $t$,
$$\begin{aligned}
&-\int_{\T^3} u_0\cdot \psi_0\,dx + \int_{\T^3} u\cdot \psi\,dx - \int_0^T\int_{\T^3} u \cdot \pa_t \psi\,dxdt + \int_0^T\int_{\T^3} (u\cdot\nabla_x)u\cdot\psi\,dxdt\cr
&\qquad = -\int_0^T\int_{\T^3} \nabla_x u : \nabla_x \psi\,dxdt - \int_0^T\int_{\T^3 \times \R^3} f(u-v)\cdot\psi\,dxdvdt.
\end{aligned}$$
\end{itemize}
\end{definition}
We are now ready to state our main result:
\begin{theorem}\label{thm_main} Let $T>0$. Suppose that the initial data $(f_0, u_0)$ satisfy
\[
f_0 \in L^\infty(\T^3 \times \R^3), \quad \int_{\R^3} |v|^2 f_0(x,v)\,dv \in L^\infty(\T^3), \quad \mbox{and} \quad u_0 \in L^2(\T^3).
\]
Then there exists at least one weak solution to the system \eqref{main_sys} in the sense of Definition \ref{def_weak} satisfying the following estimates:
$$\begin{aligned}
&(i)\,\,\,\,\,\, \|f\|_{L^\infty(\T^3 \times \R^3 \times (0,T))} \leq C\|f_0\|_{L^\infty(\T^3 \times \R^3)},\cr
&(ii)\,\,\,\, \frac12\lt(\int_{\T^3 \times \R^3} |v|^2 f\,dxdv + \int_{\T^3} |u|^2\,dx\rt) + \mu\int_0^t\int_{\T^3} |\nabla_x u|^2\,dxds + \int_0^t \int_{\T^3 \times \R^3} |u-v|^2f\,dxdvds\cr
&\qquad \qquad \qquad \leq C\lt(\int_{\T^3 \times \R^3} |v|^2 f_0\,dxdv + \int_{\T^3} |u_0|^2\,dx\rt),\cr
&(iii)\,\,\int_{\T^3 \times \R^3} f|\ln f|\,dxdv+\int^T_0\!\!\int_{\T^3 \times \R^3}
\big\{\mathcal{M}(f)-f\big\}\ln f \,dxdvdt
\leq C_{f_0,T},
\end{aligned}$$
for almost every $t \in (0,T)$.
\end{theorem}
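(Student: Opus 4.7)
I would construct the weak solution as a limit of regularized approximations, derive uniform bounds that match the three estimates in the theorem, and pass to the limit using the compactness tools advertised in the abstract. The new difficulty compared with previous Navier--Stokes/kinetic results is the strong nonlinearity of $\mathcal{M}(f)$ in $f$ through the moments $\rho_f, U_f, T_f$, which forces the limit passage for the kinetic unknown to rely on strong (rather than merely weak) convergence of those moments.

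\textbf{Approximation and uniform estimates.} First, mollify the fluid velocity appearing in the kinetic drift by $u_\e := \eta_\e \ast_x u$ (with $\eta_\e$ a standard spatial mollifier), truncate the velocity variable, and replace $\mathcal{M}(f)$ by a regularized $\mathcal{M}_\e(f)$ in which $T_f$ is shifted to $T_f + \e$ and $\rho_f, |U_f|$ are capped. A Banach fixed-point iteration that alternates between solving the kinetic equation in $f$ by characteristics (for prescribed smooth $u$) and the incompressible Navier-Stokes equation in $u$ by Leray theory (for prescribed $f$) produces a classical approximate sequence $(f_\e, u_\e)$. The three estimates of the theorem are then derived directly on this sequence: the $L^\infty$ bound via the Duhamel formula $f_\e(x,v,t) = e^{-t} f_0(X,V) + \int_0^t e^{-(t-s)} \mathcal{M}_\e(f_\e)(X(s),V(s),s)\,ds$, together with an $L^\infty$ control on $\mathcal{M}_\e(f_\e)$ that itself needs a high-order moment bound on $f_\e$ (multiply by $1+|v|^q$, use the cancellation of the mass, momentum and energy moments of $\mathcal{M}(f)-f$ for the lower-order contributions, and close by Gronwall); the energy identity by testing the kinetic equation against $|v|^2/2$ and the fluid equation against $u$, so that the drag terms combine into the dissipation $-\int|u-v|^2 f\,dxdv$; and the entropy/dissipation bound by multiplying by $\ln f_\e$ and using the BGK $H$-theorem sign.

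\textbf{Compactness and passage to the limit.} The moment and entropy estimates give equi-integrability of $\{f_\e\}$ on $\T^3 \times \R^3 \times (0,T)$, so Dunford--Pettis extracts a weak-$L^1$ limit $f$ and a corresponding weak-$L^1$ limit for $\mathcal{M}_\e(f_\e) - f_\e$. The bound $u_\e \in L^\infty(0,T;\mathcal{H}) \cap L^2(0,T;\mathcal{V})$, together with an $H^{-s}_x$ bound on $\partial_t u_\e$ read off the fluid equation, yields strong $L^2_{t,x}$ compactness of $u_\e$ by Aubin--Lions, so that the drag term $\int f_\e (u_\e - v)\,dv$ passes to the limit in the fluid equation by strong--weak pairing.

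\textbf{Main obstacle.} Identifying the weak-$L^1$ limit of $\mathcal{M}_\e(f_\e)$ as $\mathcal{M}(f)$ is the truly delicate step. I would apply the velocity averaging lemma to the kinetic equation to upgrade the weak compactness of $\rho_{f_\e}$, $\rho_{f_\e} U_{f_\e}$, and $\int |v|^2 f_\e\,dv$ to strong $L^p_{t,x}$ compactness. The $L^\infty$ and high-order moment bounds then allow one to pass from strong convergence of these integrals to a.e.\ convergence of $U_{f_\e}$ and $T_{f_\e}$; the moment control is essential here because it must force the artificial lower bound on the temperature imposed by the regularization not to leak into the limit, i.e., $T_f$ must not degenerate as $\e \to 0$ on a set of positive measure. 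Pointwise convergence of the moments together with uniform integrability of $\mathcal{M}_\e(f_\e)$ then gives $L^1$ convergence of the Maxwellians, after which every term in the weak formulation passes to the limit.
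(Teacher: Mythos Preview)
Your overall architecture---regularize, derive uniform $L^\infty$, energy, and entropy bounds, extract weak limits by Dunford--Pettis, upgrade moments to strong convergence by velocity averaging, close the fluid part by Aubin--Lions---matches the paper. Two points, however, are either different from the paper in a way that creates work, or are genuine gaps.

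\textbf{Regularization.} You propose to tame $\mathcal{M}$ directly by shifting $T_f\mapsto T_f+\e$ and capping $\rho_f,|U_f|$. The paper does \emph{not} do this: it keeps the exact Maxwellian and instead regularizes the \emph{initial data} by $f_{0,\e}=\eta\star(f_01_{f_0<1/\e})+\e e^{-|v|^2}$, which forces uniform positive lower bounds on $\rho_{f_\e},T_{f_\e}$ along the evolution. The reason matters: the cancellation $\int(\mathcal{M}(f)-f)(1,v,|v|^2)\,dv=0$ is used repeatedly (energy estimate, third moment estimate, entropy $H$-theorem). Your $\mathcal{M}_\e$ breaks these identities, so the clean energy/entropy computations you invoke no longer hold as written; you would need to control the extra error terms uniformly in $\e$, which is not obvious. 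Also, a minor slip: along characteristics the factor is $e^{2t}$, not $e^{-t}$, because $\nabla_v\cdot((u-v)f)$ contributes $-3f$; combined with the $-f$ on the right you get $+2f$.

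\textbf{The third moment.} The step you flag as ``high-order moment bounds'' is the heart of the matter, and it is not automatic. Velocity averaging (as stated in the paper) gives strong compactness of $\int f_\e\psi\,dv$ only for $|\psi(v)|\lesssim|v|$; to get strong convergence of $\int|v|^2 f_\e\,dv$---and hence a.e.\ convergence of $T_{f_\e}$---one needs a uniform-in-$\e$ bound on a moment strictly higher than two. The paper singles this out as the key new estimate and obtains $\int_0^{T_*}\!\!\int|v|^3 f_\e\,dxdv\,dt\le C$ by testing the kinetic equation against the nonstandard multiplier $\Phi(x,v)=\dfrac{(1+|v|^2)^{1/2}\,x\cdot v}{(1+|x|^2)^{1/2}}$, after first proving the fluid--kinetic mixed bound $\|(\eta_\e\star u_\e-v)(1+|v|)f_\e\|_{L^1}\le C$. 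Your proposal does not contain a mechanism for producing this third moment, and without it the identification $\mathcal{M}(f_\e)\rightharpoonup\mathcal{M}(f)$ cannot be completed. Note also that the degeneracy issue is not ``$T_f$ must not vanish on a set of positive measure''; rather, one works on $E=\{\rho>0\}$ where $U,T$ are defined, shows $\mathcal{M}(f_\e)\to\mathcal{M}(f)$ there via Fatou plus mass matching, and observes separately that on $E^c$ both sides are zero.
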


One of the key elements in the proof is the derivation of the  third moment estimate that remains uniformly bounded with respect to the mollification parameter $\varepsilon$. To derive the weak compactness of the local Maxwellian, we first need to obtain the compactness of the macroscopic fields. For the compactness of the local density and bulk velocity, the second moment estimate combined with the velocity averaging lemma is enough to derive the desired result. However, we need a moment estimate strictly higher than 2 to derive the compactness of the local temperature(see \cite{P}).
In view of this, we observe that the third moment of the regularized distribution function $f_{\varepsilon}$ can be controlled by the kinetic energy of the suspended particles and a fluid-particle
type estimate:
\[
\int^T_0\!\!\!\int_{\T^3 \times \R^3} f_\e |v|^3\,dxdvdt\leq C\lt(\|(\eta\star u_{\varepsilon}-v)f_\e(1+|v|)\|_{L^1}+\int_0^T\int_{\T^3 \times \R^3} f_\e|v|^2\,dxdvdt\rt),
\]
for some $C>0$ independent of $\e$, which in turn is bounded by $L^5$ norm of the fluid velocity.

For the existence of solutions to the fluid equations, a strong compactness is required to control the convection term. For this, we again need to have some uniform bounds for the local density  and local moments together with the total energy estimates. This, combined with the smoothing effect from the viscosity enables us to use the Aubin-Lions lemma to have the strong compactness.\newline

The outline of this paper is as follows: In Section 2, we record several technical lemmas. In Section 3, we set up a regularized approximate system for the Navier-Stokes-BGK model \eqref{main_sys}.
Then, we prove the existence of the regularized model in Section 4, and derive several key a priori estimates independent of the regularizing parameter in Section 5. Section 6 is devoted
to the proof of the main theorem.

%
%

\section{Preliminaries: Auxiliary Lemmas}\label{sec_pre}
In this section, we record various technical lemmas that will be crucially used later.
%
%
We first state the lower bound estimate of the local temperature, which is essential for the local Maxwellian to be well-defined.
\begin{lemma}\label{moments}\emph{\cite{PP}} There exists a positive constant $C_q$, which depends only on $q$, such that
\begin{eqnarray*}
\rho_f(x,t) \leq C_q\|f\|_{L^{\infty}_q}T^{3/2}_f(x,t) \quad (q>3 \mbox{ or }q=0).
\end{eqnarray*}
\end{lemma}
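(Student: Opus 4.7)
The plan is to prove the lemma by the classical splitting-in-velocity argument, balancing a pointwise $L^\infty$ bound on the "small-velocity" part against a Chebyshev-type bound on the "large-velocity" part using the second moment.

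Fix $(x,t)$ and, to lighten notation, suppress these arguments. For an arbitrary cutoff $R>0$, I would split
\[
\rho_f = \int_{|v-U_f|\le R} f\,dv + \int_{|v-U_f|>R} f\,dv.
\]
On the inner region, observe that by the definition of the weighted norm, $f(v)\le \|f\|_{L^\infty_q}/(1+|v|^q)\le \|f\|_{L^\infty_q}$ for every $v$ (so in particular for $q=0$ and for $q>3$), hence
\[
\int_{|v-U_f|\le R} f\,dv \le \frac{4\pi}{3}\,R^3\,\|f\|_{L^\infty_q}.
\]
On the outer region, Chebyshev's inequality together with the definition of the temperature yields
\[
\int_{|v-U_f|>R} f\,dv \le \frac{1}{R^2}\int_{\R^3} |v-U_f|^2 f\,dv = \frac{3\rho_f T_f}{R^2}.
\]

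Combining the two estimates gives $\rho_f \le \tfrac{4\pi}{3} R^3 \|f\|_{L^\infty_q} + 3R^{-2}\rho_f T_f$. To absorb the $\rho_f$ on the right I would pick $R$ so that $3T_f/R^2 = 1/2$, i.e.\ $R = \sqrt{6 T_f}$. This yields
\[
\tfrac{1}{2}\rho_f \le \tfrac{4\pi}{3}(6T_f)^{3/2}\|f\|_{L^\infty_q},
\]
and rearranging produces the asserted inequality $\rho_f \le C_q \|f\|_{L^\infty_q}\, T_f^{3/2}$ with an explicit $C_q$.

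I do not anticipate any real obstacle: the argument is a one-line optimization once the split is set up. The only things to keep track of are (i) that the cutoff $R$ must be chosen after, not before, fixing $(x,t)$, since $T_f$ depends on $(x,t)$, and (ii) that the inner-ball bound works uniformly for $q=0$ and $q>3$ because the pointwise estimate $f\le \|f\|_{L^\infty_q}$ holds without needing integrability of the weight. The role of the restriction on $q$ enters only if one prefers to bound the inner integral directly as $\|f\|_{L^\infty_q}\int (1+|v|^q)^{-1}\,dv$; the argument above circumvents that issue entirely.
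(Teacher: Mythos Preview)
Your argument is correct and is indeed the standard splitting-in-velocity proof of this inequality. Note, however, that the paper does not give its own proof of this lemma: it is stated with a citation to Perthame--Pulvirenti \cite{PP} and left unproved, so there is nothing in the paper to compare against beyond the bare statement.

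Two minor remarks. First, your choice $R=\sqrt{6T_f}$ presupposes $T_f>0$; you should add the one-line observation that if $T_f(x,t)=0$ then $\int |v-U_f|^2 f\,dv=0$ forces $f(\cdot,v)=0$ for a.e.\ $v$ (since $f\in L^\infty$), hence $\rho_f=0$ and the inequality is trivial. Second, your closing comment is on point: as written, your proof yields a constant that does not depend on $q$ at all, so the restriction ``$q>3$ or $q=0$'' plays no role in this particular estimate; it is relevant for the companion estimates in \cite{PP} (and for Lemma~\ref{M<f} below), not here.
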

We also need to control the growth of the local Maxwellian by that of the distribution functions:
\begin{lemma}\label{M<f}\emph{\cite{PP}} Suppose $\|f\|_{L^{\infty}_q}<\infty$ for $q>5$. Then
there exists a positive constant $C_q$, which depends only on $q$, such that
\[
\|\mathcal{M}(f)\|_{L^{\infty}_q}\leq C_q \|f\|_{L^{\infty}_q} \quad (q > 5 \mbox{ or } q=0).
\]
\end{lemma}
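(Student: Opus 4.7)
Let $M := \|f\|_{L^\infty_q}$; the goal is the pointwise bound $(1+|v|^q)\mathcal{M}(f)(v,x,t) \leq C_q M$, uniform in $(x,v,t)$. The case $q=0$ is immediate: $\mathcal{M}(f) \leq \rho_f/(2\pi T_f)^{3/2} \leq C_0 M$ by Lemma \ref{moments} with $q=0$. For $q>5$, I would start with the elementary decomposition $|v|^q \leq C_q(|U_f|^q + |v-U_f|^q)$
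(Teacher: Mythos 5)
The paper offers no proof of this lemma: it is quoted verbatim from Perthame--Pulvirenti \cite{PP}, so there is no internal argument to compare against. Your $q=0$ case is correct and complete: $\mathcal{M}(f)\leq \rho_f(2\pi T_f)^{-3/2}$, and Lemma \ref{moments} with $q=0$ gives $\rho_f\leq C_0\|f\|_{L^\infty}T_f^{3/2}$, which is exactly what is needed.

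For $q>5$, however, your proposal stops after the decomposition $|v|^q\leq C_q(|U_f|^q+|v-U_f|^q)$, and essentially all of the difficulty lies beyond that point. Inserting the decomposition and using $|v-U_f|^q e^{-|v-U_f|^2/(2T_f)}\leq C_q T_f^{q/2}$, the claim reduces to the two estimates
\[
\frac{\rho_f |U_f|^q}{T_f^{3/2}}\leq C_q\|f\|_{L^\infty_q}
\quad\mbox{and}\quad
\rho_f\, T_f^{(q-3)/2}\leq C_q\|f\|_{L^\infty_q},
\]
neither of which follows from Lemma \ref{moments}, which only controls $\rho_f T_f^{-3/2}$ from above. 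What is needed is the companion interpolation inequality of \cite{PP}, roughly $\rho_f\bigl(|U_f|^q+T_f^{q/2}\bigr)\leq C_q\|f\|_{L^\infty_q}T_f^{3/2}$ for $q>5$, whose proof (splitting the integrals defining $\rho_f$, $\rho_f U_f$ and $3\rho_f T_f$ over the regions $|v-U_f|<R$ and $|v-U_f|\geq R$ and optimizing in $R$, in the spirit of the proof of Lemma \ref{moments}) is the real content of the lemma; it is also the only place the hypothesis $q>5$ enters, e.g.\ through the integrability of $|v|^2(1+|v|^q)^{-1}$. As written, your argument never uses $q>5$ and does not reach the conclusion: the missing piece is the entire control of the macroscopic quantities $U_f$ and $T_f$ in terms of the weighted norm.
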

The next lemma says that, unlike the above estimate, the constant depends also on the final time and the lower bounds of macroscopic fields if we are to control the growth of derivatives either.
\begin{lemma}\label{h}\emph{\cite{Y}}
Assume that $f$ satisfies
\begin{enumerate}
\item $\|f\|_{L^{\infty}_q}+\|\nabla_{x,v}f\|_{L^{\infty}_q}<C_{1}$,
\item $\rho_{f}+|U_f|+ T_f<C_{2}$,
\item $\rho_{f}, T_{f}>C_{3}$,
\end{enumerate}
for some constants $C_{i}>0$ $(i=1,2,3)$.
Then, we have
\begin{align*}
\|\mathcal{M}(f)\|_{L^{\infty}_q}+\|\nabla_{x,v}\mathcal{M}(f)\|_{L^{\infty}_q}\leq
C_{T}\left\{\|f\|_{L^{\infty}_q}+\|\nabla_{x,v}f\|_{L^{\infty}_q}\right\},
\end{align*}
where $C_T > 0$ depends only on $C_{1}$, $C_{2}$, $C_{3}$ and the final time $T$.
\end{lemma}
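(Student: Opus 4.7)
The plan is to prove the two halves of the estimate separately. Since $\|\mathcal{M}(f)\|_{L^\infty_q} \leq C_q\|f\|_{L^\infty_q}$ is already Lemma~\ref{M<f}, the substantive task is to bound $\|\nabla_{x,v}\mathcal{M}(f)\|_{L^\infty_q}$, and the two gradient components $\nabla_v\mathcal{M}(f)$ and $\nabla_x\mathcal{M}(f)$ must be handled by rather different arguments.

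First I would treat the velocity gradient, which is the easier of the two. A direct computation gives
\[
\nabla_v \mathcal{M}(f) = -\frac{v-U_f}{T_f}\,\mathcal{M}(f).
\]
Under the upper bounds $|U_f| \leq C_2$, $T_f \leq C_2$ and the lower bound $T_f \geq C_3$ from hypotheses~(2)--(3), the Gaussian tail of $\mathcal{M}(f)$ absorbs both the prefactor $(v-U_f)/T_f$ and the polynomial weight $(1+|v|^q)$, yielding $(1+|v|^q)|\nabla_v\mathcal{M}(f)| \leq C(C_2,C_3,q)\rho_f$; then $\rho_f \leq C_q\|f\|_{L^\infty_q}$ by integrating $f \leq \|f\|_{L^\infty_q}(1+|v|^q)^{-1}$ (integrable for $q > 3$).

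The bulk of the work is for $\nabla_x \mathcal{M}(f)$. Writing $\mathcal{M}(f) = \rho_f G$ with $G = (2\pi T_f)^{-3/2}\exp(-|v-U_f|^2/(2T_f))$ and applying the chain rule produces an expression linear in $\nabla_x \rho_f, \nabla_x U_f, \nabla_x T_f$. To control these, I would differentiate the defining moment integrals under the integral sign:
\[
\nabla_x \rho_f = \int_{\R^3}\nabla_x f\,dv,\quad \nabla_x(\rho_f U_f) = \int_{\R^3} v\,\nabla_x f\,dv,\quad \nabla_x\lt(3\rho_f T_f + \rho_f|U_f|^2\rt) = \int_{\R^3} |v|^2\,\nabla_x f\,dv.
\]
Each right-hand side is bounded by $\|\nabla_x f\|_{L^\infty_q}\int(1+|v|^k)(1+|v|^q)^{-1}\,dv$, which is finite provided $q > k+3$. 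Solving the resulting triangular system for $\nabla_x\rho_f,\nabla_x U_f,\nabla_x T_f$ uses division by $\rho_f$ and $T_f$, legitimate by the lower bounds of~(3), together with $|U_f| \leq C_2$ from~(2); this yields $\|\nabla_x\rho_f\|_\infty + \|\nabla_x U_f\|_\infty + \|\nabla_x T_f\|_\infty \leq C(C_1,C_2,C_3)\,\|\nabla_x f\|_{L^\infty_q}$.

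Substituting back and multiplying through by $(1+|v|^q)$, the polynomial prefactors $1, |v-U_f|, |v-U_f|^2$ coming from the chain rule are all absorbed by the Gaussian factor once $T_f \in [C_3, C_2]$ and $|U_f|\leq C_2$; a convenient device is to split $\mathcal{M}(f) = \mathcal{M}(f)^{1-\theta}\mathcal{M}(f)^\theta$ for a small $\theta > 0$ so that $(1+|v|^q)\cdot(\text{polynomial})\cdot\mathcal{M}(f)^{1-\theta}$ is uniformly bounded while $\mathcal{M}(f)^\theta$ contributes only boundedly. The main obstacle I anticipate is purely bookkeeping: tracking the explicit dependence on $(C_1,C_2,C_3)$ and ensuring $q$ is large enough that every weighted moment $\int(1+|v|^k)(1+|v|^q)^{-1}\,dv$ converges. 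The final-time $T$ in $C_T$ enters only indirectly, namely when this lemma is later applied along an evolving $f$ whose constants $C_1,C_2,C_3$ themselves depend on the horizon $T$.
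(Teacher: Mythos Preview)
The paper does not supply its own proof of this lemma; it is simply quoted from \cite{Y}. Your sketch is correct and is precisely the standard argument one finds in that reference: handle $\nabla_v\mathcal{M}(f)$ by the explicit formula $-\tfrac{v-U_f}{T_f}\mathcal{M}(f)$ together with the two-sided bounds on $T_f$ and the upper bound on $|U_f|$; handle $\nabla_x\mathcal{M}(f)$ by the chain rule, reducing to pointwise bounds on $\nabla_x\rho_f,\nabla_x U_f,\nabla_x T_f$, which in turn follow by differentiating the moment identities under the integral sign and using the lower bounds on $\rho_f,T_f$ to invert the triangular system. The integrability requirement you flag, $q>k+3$ for the $k$-th moment, forces $q>5$, consistent with the standing hypothesis of Lemma~\ref{M<f}. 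Your closing remark is also on the mark: the estimate itself is purely algebraic in $C_1,C_2,C_3,q$, and the subscript $T$ on the constant is inherited from the context in \cite{Y}, where the $C_i$ arise from a priori bounds along an evolution on $[0,T]$.
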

The Lipschitz continuity of the local Maxwellian can be measured in the same weighted $L^{\infty}_q$ space as follows:
\begin{lemma}\label{h}\emph{\cite{Y}}
Assume $f,g$ satisfy ($h$ denotes either $f$ or $g$)
\begin{enumerate}
\item $\|h\|_{L^{\infty}_q}<C_{1}$,
\item $\rho_{h}+|U_h|+ T_h<C_{2}$,
\item $\rho_{h}, T_{h}>C_{3}$,
\end{enumerate}
for some constants $C_{i}>0$ $(i=1,2,3)$. Then, we have
\begin{align*}
\|\mathcal{M}(f)-\mathcal{M}(g)\|_{L^{\infty}_q}\leq C_{T}\|f-g\|_{L^{\infty}_q},
\end{align*}
where $C_T>0$ depends only on $C_{1}$, $C_{2}$, $C_{3}$ and the final time $T$.
\end{lemma}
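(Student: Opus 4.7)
The plan is to view the local Maxwellian $\mathcal{M}(h)$ as a smooth function of the three macroscopic parameters $(\rho_h, U_h, T_h)$ and estimate its variation by linearizing along a straight path between the macroscopic fields of $f$ and $g$. Concretely, set
\[
(\rho_\tau, U_\tau, T_\tau) := \tau(\rho_f, U_f, T_f) + (1-\tau)(\rho_g, U_g, T_g), \qquad \tau\in[0,1],
\]
and let $\mathcal{M}_\tau$ be the Maxwellian built from $(\rho_\tau, U_\tau, T_\tau)$. By hypothesis~(2)--(3), each $(\rho_\tau, U_\tau, T_\tau)$ lies in a fixed compact subset of $(0,\infty)\times \mathbb{R}^3 \times (0,\infty)$ determined by $C_2, C_3$. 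Writing
\[
\mathcal{M}(f)-\mathcal{M}(g) = \int_0^1 \Big[ \partial_\rho \mathcal{M}_\tau (\rho_f-\rho_g) + \nabla_U \mathcal{M}_\tau\cdot(U_f-U_g) + \partial_T \mathcal{M}_\tau (T_f-T_g)\Big]\,d\tau,
\]
everything reduces to two estimates: uniform weighted bounds on the partial derivatives, and control of the macroscopic differences by $\|f-g\|_{L^\infty_q}$.

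For the first estimate, the partial derivatives of $\mathcal{M}_\tau$ are of the form (constant depending on $\rho_\tau, T_\tau$) times a polynomial in $v$ and $U_\tau$ multiplied by the Gaussian $\exp(-|v-U_\tau|^2/2T_\tau)$. Since $T_\tau$ is bounded below by $C_3$ and above by $C_2$, and $|U_\tau|\le C_2$, the factor $(1+|v|^q)$ times this Gaussian-times-polynomial stays uniformly bounded by a constant $C(C_1,C_2,C_3)$. This is where the lower bound $T_h > C_3$ is essential, to prevent the Gaussian from degenerating.

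For the second estimate, I use $|f-g|(x,v,t) \le (1+|v|^q)^{-1}\|f-g\|_{L^\infty_q}$ together with $q>5$ (implicit from the setting of the earlier lemmas, and needed so that $\int (1+|v|^2)(1+|v|^q)^{-1} dv < \infty$) to get
\[
|\rho_f-\rho_g| \lesssim \|f-g\|_{L^\infty_q}, \qquad \Big|\int v(f-g)\,dv\Big| \lesssim \|f-g\|_{L^\infty_q}, \qquad \Big|\int |v|^2(f-g)\,dv\Big| \lesssim \|f-g\|_{L^\infty_q}.
\]
The bulk velocity difference is then controlled using the algebraic identity $\rho_f(U_f - U_g) = \int v(f-g)\,dv - (\rho_f-\rho_g)U_g$ together with $\rho_f\ge C_3$ and $|U_g|\le C_2$; the temperature difference is handled analogously from $3\rho_f T_f = \int |v|^2 f\,dv - \rho_f |U_f|^2$, again using the lower bound $\rho_f > C_3$ and the already-established estimate on $U_f-U_g$.

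Multiplying by $(1+|v|^q)$ and taking the essential supremum yields the claim. The only delicate point is bookkeeping the dependence of the constant $C_T$ on $C_1,C_2,C_3$ through the derivative bounds; no single step is hard, but the temperature Lipschitz estimate is the most technical because $T_h$ is a nonlinear function of the moments, so one must chain together the $\rho$ and $U$ estimates before closing it. The role of the final time $T$ in $C_T$ enters only implicitly through the \emph{a priori} bounds $C_1, C_2, C_3$ on the macroscopic fields, not through any Grönwall argument in the proof of the lemma itself.
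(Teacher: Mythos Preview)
The paper does not supply its own proof of this lemma; it is quoted from the reference \cite{Y}. Your argument---linear interpolation of the macroscopic parameters, pointwise bounds on $\partial_\rho\mathcal{M}$, $\nabla_U\mathcal{M}$, $\partial_T\mathcal{M}$ in $L^\infty_q$ using the two-sided bounds on $\rho_\tau,T_\tau$ and the upper bound on $|U_\tau|$, and control of $|\rho_f-\rho_g|$, $|U_f-U_g|$, $|T_f-T_g|$ via the first three moments of $f-g$ together with the lower bound $\rho_h>C_3$---is the standard route and is correct; it is essentially the same as the proof in \cite{Y}. Your closing remark that the dependence on the final time $T$ is only inherited through the constants $C_1,C_2,C_3$ and not through any time-integration in the lemma itself is also accurate.
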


In the lemma below, we give an interpolation-type inequality for local moments of $f$. For this, we set
$(k=0,1,2,\cdots)$
\[
m_k f (x,t) := \int_{\R^3} |v|^k f (x,v,t)\,dv \quad \mbox{and} \quad M_k f (t) := \int_{\T^3 \times \R^3} |v|^k f(x,v,t)\,dxdv.
\]

\begin{lemma}\label{lem_mom} \emph{\cite{BDGM}} Let $\beta > 0$ and $g \in L^\infty_+(\T^3 \times \R^3 \times (0,T))$ with $m_\beta g(x,t) < \infty$ for almost every $(x,t)$. Then we have
\[
m_\alpha g(x,t) \leq \lt(\frac{4\pi}{3}\|g(t)\|_{L^\infty} + 1\rt)m_\beta g(x,t)^{\frac{\alpha + 3}{\beta + 3}} \quad \mbox{a.e. } (x,t),
\]
for any $\alpha < \beta$.
\end{lemma}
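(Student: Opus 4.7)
The plan is to prove this by the classical moment-splitting argument: decompose the velocity integral into an inner ball, where the $L^\infty$ bound on $g$ is effective, and its exterior, where a Chebyshev-type estimate against $|v|^\beta g$ is effective, and then optimize over the radius.

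To start, I would fix $(x,t)$ (where $m_\beta g(x,t)<\infty$) and $R>0$ to be chosen, and write
\[
m_\alpha g(x,t) = \int_{|v|\le R}|v|^\alpha g(x,v,t)\,dv + \int_{|v|>R}|v|^\alpha g(x,v,t)\,dv.
\]
For the first term, I would bound $g\le \|g(t)\|_{L^\infty}$ pointwise and compute the explicit integral $\int_{|v|\le R}|v|^\alpha dv = \frac{4\pi R^{\alpha+3}}{\alpha+3}$ (which is finite because in the setting of the paper, moments are taken with $\alpha\ge 0$). For the second term, since $\alpha<\beta$ the exponent $\alpha-\beta$ is negative, so on $\{|v|>R\}$ one has $|v|^{\alpha-\beta}\le R^{\alpha-\beta}$, and hence
\[
\int_{|v|>R}|v|^\alpha g\,dv \le R^{\alpha-\beta}\int_{|v|>R}|v|^\beta g\,dv \le R^{\alpha-\beta}\, m_\beta g(x,t).
\]

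Next, I would optimize $R$ so that both terms scale with the correct power of $m_\beta g$. The natural choice is $R = m_\beta g(x,t)^{1/(\beta+3)}$, which converts both $R^{\alpha+3}$ and $R^{\alpha-\beta}\, m_\beta g$ into $m_\beta g(x,t)^{(\alpha+3)/(\beta+3)}$. Adding the two contributions gives
\[
m_\alpha g(x,t) \le \Bigl(\tfrac{4\pi}{\alpha+3}\|g(t)\|_{L^\infty} + 1\Bigr)\, m_\beta g(x,t)^{(\alpha+3)/(\beta+3)}.
\]
Finally, since $\alpha\ge 0$ in all uses of this inequality in the paper (the notation $m_k f$ is introduced only for $k=0,1,2,\dots$), one has $\alpha+3\ge 3$, and so $\tfrac{4\pi}{\alpha+3}\le \tfrac{4\pi}{3}$, which yields the stated estimate.

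There is essentially no hard step here; the only mildly delicate point is to recognize that the coefficient $\tfrac{4\pi}{3}$ in the statement is a universal upper bound for $\tfrac{4\pi}{\alpha+3}$ obtained by invoking $\alpha\ge 0$, rather than emerging directly from a sharp calculus optimization (the true minimizing $R$ would yield a slightly smaller constant but an uglier expression). If the case $m_\beta g(x,t)=0$ arises, then $g(x,\cdot,t)$ vanishes a.e.\ on $\{|v|>0\}$ and the inequality is trivial; if $m_\beta g(x,t)=+\infty$ is allowed, the inequality is vacuous, so the claim holds a.e.\ as stated.
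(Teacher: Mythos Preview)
Your argument is correct and is precisely the standard moment-splitting proof that the cited reference \cite{BDGM} uses; the paper itself does not reprove the lemma but simply quotes it. Your handling of the constant (bounding $\tfrac{4\pi}{\alpha+3}$ by $\tfrac{4\pi}{3}$ via $\alpha\ge 0$) and of the degenerate cases is accurate, and it matches the way the lemma is applied throughout the paper (all instances have $\alpha\in\{0,1\}$, including the non-integer $\beta$ used in the appendix).
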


We next state the velocity averaging lemma.
\begin{lemma}\label{lem_veloa}\emph{\cite{CCK14}}
For $1 \leq p < 5/4$, let $\{g^n\}_n$ be bounded in $L^p(\T^3 \times \R^3 \times (0,T))$. Suppose that $f^n$ is bounded in $L^\infty(0,T;(L^1 \cap L^\infty)(\T^3 \times \R^3))$ and $|v|^2 f^n$ is bounded in $L^\infty(0,T;L^1(\T^3 \times \R^3))$. If $f^n$ and $g^n$ satisfy the equation
\[
\partial_tf^n + v \cdot \nabla_x f^n = \nabla_v^k g^n, \quad f^n|_{t=0} = f_0 \in L^p(\T^3 \times \R^3),
\]
for a multi-index $k$. Then, for any $\psi(v)$, such that $|\psi(v)| \leq c|v|$ as $|v| \to \infty$, the sequence
\[
\lt\{ \int_{\R^3} f^n \psi(v)\,dv\rt\}_n
\]
is relatively compact in $L^p(\T^3 \times (0,T))$.
\end{lemma}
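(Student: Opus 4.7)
The plan is to reduce to the classical velocity averaging lemma for smooth compactly supported $v$-test functions, and to handle the linear growth of $\psi$ via a moment-interpolation tail estimate; the latter is what dictates the threshold exponent $5/4$. Fix a smooth cutoff $\chi_R \in \mc^\infty_c(\R^3)$ with $\chi_R \equiv 1$ on $\{|v| \le R\}$ and $\chi_R \equiv 0$ on $\{|v| \ge 2R\}$, and decompose
\[
\int_{\R^3} f^n \psi\,dv = \int_{\R^3} f^n \psi \chi_R\,dv + \int_{\R^3} f^n \psi(1-\chi_R)\,dv.
\]

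For the first piece, observe that $\{f^n\}$ bounded in $L^\infty(0,T;L^1 \cap L^\infty)$ gives, by interpolation, uniform boundedness in $L^r(\T^3 \times \R^3 \times (0,T))$ for every $r \in [1,\infty]$. Combined with the $L^p$-boundedness of $g^n$, the classical Fourier-analytic velocity averaging lemma (DiPerna--Lions--Meyer, or its periodic analog used in \cite{CCK14}) applied to the equation $\partial_t f^n + v \cdot \nabla_x f^n = \nabla_v^k g^n$ against the smooth compactly supported test function $\psi \chi_R$ yields that $\{\int_{\R^3} f^n \psi \chi_R\,dv\}_n$ is relatively compact in $L^p(\T^3 \times (0,T))$ for each fixed $R$.

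For the tail, using $|\psi(v)| \le c|v|$ for $|v|$ large and $R$ large,
\[
\lt|\int_{\R^3} f^n \psi(1-\chi_R)\,dv\rt| \le C \int_{|v|\ge R} f^n |v|\,dv \le \frac{C}{R}\,m_2 f^n(x,t),
\]
so integration in $(x,t)$ gives an $L^1(\T^3 \times (0,T))$-bound of order $CT/R$ on the tail by the second moment hypothesis. Simultaneously, Lemma \ref{lem_mom} applied to $f^n(1-\chi_R)$ with $\alpha = 1$, $\beta = 2$ together with the uniform bound $\|f^n\|_{L^\infty} \le C$ yields
\[
\bigl(m_1(f^n(1-\chi_R))\bigr)^{5/4} \le C\,m_2(f^n(1-\chi_R)) \le C\,m_2 f^n,
\]
so the tail is uniformly bounded in $L^{5/4}(\T^3 \times (0,T))$. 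Interpolating between $L^1$ and $L^{5/4}$, for every $p \in [1, 5/4)$ one finds an exponent $\theta(p) = 5/p - 4 > 0$ with the $L^p$-norm of the tail bounded by $C\,R^{-\theta}$, vanishing uniformly in $n$ as $R \to \infty$.

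A standard diagonal extraction along $R_j \to \infty$ then produces a subsequence along which $\{\int_{\R^3} f^n \psi\,dv\}$ is Cauchy in $L^p(\T^3 \times (0,T))$, yielding the claimed relative compactness. The principal difficulty is the tail estimate: the exponent $5/4$ is sharp, being the highest Lebesgue exponent in which $m_1$ can be controlled by $m_2$ via the $L^\infty$-bound on $f^n$ through Lemma \ref{lem_mom}, and the interpolation gap $\theta = 5/p - 4$ closes as $p \uparrow 5/4$, which is precisely the source of the restriction on $p$. The velocity averaging step is standard once the $L^r$ uniform bounds on $f^n$ have been extracted from the $L^1 \cap L^\infty$ hypothesis.
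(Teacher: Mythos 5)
The paper does not actually prove this lemma --- it is quoted from \cite{CCK14} --- so there is no in-text argument to compare against, and your proof is essentially the one used in that reference: velocity truncation, the classical $L^p$ averaging lemma for the compactly supported piece, and the moment interpolation $m_1 f^n \leq C(m_2 f^n)^{4/5}$ from Lemma \ref{lem_mom} to bound the tail in $L^{5/4}$ and make it small in $L^p$ exactly when $p<5/4$, which correctly identifies the source of the threshold. The only point to watch is the endpoint $p=1$, where the Fourier-analytic averaging lemma you invoke requires equi-integrability (not mere boundedness) of $g^n$ in $L^1$; for $1<p<5/4$ the argument is complete as written.
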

%
%

\section{Global existence for a regularized system}\label{sec_gl_reg}
In this section, we consider a regularized system of \eqref{main_sys}. As in \cite{BDGM}, we regularize the fluid velocity in the drag forcing and convection terms, and apply a high-velocity cut-off to the drag force in the fluid part to relax some difficulties in the system \eqref{main_sys}. More precisely, let $\e>0$ and $\eta$ be a standard mollifier:
\[
0 \leq \eta \in \mc^\infty_0(\T^3), \quad \mbox{supp}_x\eta \subseteq B(0,1), \quad \int_{\T^3} \eta(x)\,dx = 1,
\]
and we set a sequence of smooth mollifiers $\eta_\e(x) = (1/\e^3)\eta(x/\e)$. We also introduce a cut-off function $\gamma_\e \in \mc^\infty(\R^3)$:
\[
\mbox{supp} \gamma_\e \subseteq B(0,1/\e), \quad 0 \leq \gamma_\e \leq 1, \quad \gamma_\e = 1 \mbox{ on } B(0,1/(2\e)), \quad \mbox{and} \quad \gamma_\e \to 1 \mbox{ as } \e \to 0.
\]
Then the regularized system for the system \eqref{main_sys} is defined as follows:
\begin{align}\label{sys_reg}
\begin{split}
&\qquad\pa_t f_\e + v \cdot \nabla_x f_\e + \nabla_v \cdot ((\eta_\e \star u_\e-v)f_\e) = \mathcal{M}(f_\e) - f_\e,\cr
&\pa_t u_\e + (\eta_\e \star u_\e)\cdot\nabla_x u_\e + \nabla_x p_\e - \mu\Delta_x u_\e= - \int_{\R^3} \gamma_\e(v)(u_\e-v) f_\e \,dv,\cr
&\hspace{4cm}\nabla_x\cdot u_\e = 0,
\end{split}
\end{align}
subject to regularized initial data:
\[
\lt(f_\e(x,v,0), u_\e(x,0)\rt) =: (f_{0,\e}(x,v), u_{0,\e}(x)), \quad (x,v) \in \T^3 \times \R^3,
\]
Here $\star$ represents the convolution with respect to the spatial variable $x$.
$u_{0,\e}$ is any  $\mc^\infty$ approximation of $u_0$ such that
$ u_{0,\e} \to u_0$ strongly in $L^2(\T^3)$ as $\e \to 0$, and
$f_{0,\e}$ is defined by
\[
f_{0,\e}= \eta\star \big\{f_01_{f_0<1/\e}\big\} +\e e^{-|v|^2}.
\]
where $1_A$ denotes the characteristic function on $A$. Note that $f_{0,\e}$ satisfies $f_{0,\e} \to f_0$ strongly in $L^p(\T^3 \times \R^3)$ for all $p < \infty$ and weakly-$*$ in $L^\infty(\T^3 \times \R^3)$, $ M_2 f_{0,\e} \to M_2 f_0$ strongly in $L^\infty(\T^3)$ and uniformly bounded with respect to $\e$,

In the following two sections, we prove the proposition below on the global-in-time existence of weak solutions and local-in-time uniform bound estimates of the regularized system \eqref{sys_reg}.
%
%
%
%
\begin{proposition}\label{prop_ext_reg} $(1)$ For any $T>0$ and $\e >0$, there exists at least one weak solution $(f_\e,u_\e)$ of the regularized system \eqref{sys_reg} in the sense of Definition \ref{def_weak}.\newline
\noindent$(2)$  Moreover, there exists a $T_* \in (0,T]$, which only depends on $T$, $\|u_0\|_{L^2} + M_2 f_0$, and $\|f_0\|_{L^\infty}$ such that
\begin{itemize}
\item Total energy estimate:
\bq\label{uni_te}
\sup_{0 \leq t \leq T_*}\lt(\|u_\e(t)\|_{L^2}^2 + M_2 f_\e(t) + \int_0^t \|\nabla_x u_\e(s)\|_{L^2}\,ds \rt) \leq C_1.
\eq
\item Fluid-kinetic mixed estimate:
\bq\label{est_fk}
\|(\eta_\e \star u_\e - v)(1+|v|)f_\e\|_{L^1}<C(f_0,u_0, T_*).
\eq
\item Third moment and entropy estimate:
\[
\|M_3 f_\e\|_{L^1(0,T_*)} + \int_{\mathbb{T}^3\times\mathbb{R}^3} f_\e(t)|\ln f_\e(t)|\,dxdv\leq C(f_0, u_0,T_*).
\]
\end{itemize}
Here, in particular, $C_1 >0$ depends only on $T_*$, $T$, $\|u_0\|_{L^2} + M_2 f_0$, and $\|f_0\|_{L^\infty}$.
\end{proposition}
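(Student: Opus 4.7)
The plan for part (1) is a successive-approximation scheme that exploits the two regularizations. Given a candidate fluid field $u_\e^n$, the drift $\eta_\e\star u_\e^n$ is smooth in $x$, so the BGK equation for $f_\e^{n+1}$ is a linear transport equation with smooth drift together with the relaxation operator; writing it in mild form along the characteristics $\dot X = v$, $\dot V = \eta_\e\star u_\e^n - V$ and using Lemmas 2.1--2.4 to handle the local Maxwellian, one solves it in $L^\infty(0,T;L^\infty_q(\T^3\times\R^3))$ for some $q>5$. Given $f_\e^{n+1}$, the drag $-\int\gamma_\e(v)(u_\e - v)f_\e^{n+1}\,dv$ is compactly supported in $v$ and uniformly bounded in $L^\infty_{t,x}$, so standard Leray theory produces a Leray--Hopf weak solution $u_\e^{n+1}$. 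A contraction in the weighted $L^\infty_q$-norm for $f$ and the Leray energy norm for $u$, set up on a short interval $[0,\tau_\e]$, yields a fixed point; one then extends to $[0,T]$ by the a priori estimates described below.

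The first ingredient for the a priori bounds is an $L^\infty$ bound on $f_\e$. Using $\nabla_v\cdot(\eta_\e\star u_\e - v)=-3$, the regularized kinetic equation along its characteristics reduces to $\tfrac{d}{dt}f_\e = \mathcal{M}(f_\e)+2f_\e$, so that Lemma 2.2 combined with Gr\"onwall yields $\|f_\e(t)\|_{L^\infty}\leq e^{Ct}\|f_{0,\e}\|_{L^\infty}$ uniformly in $\e$. For the total energy estimate, multiplying the first equation of \eqref{sys_reg} by $|v|^2/2$, the second by $u_\e$ and adding, the BGK term vanishes by energy conservation and the convection term vanishes by incompressibility. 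What remains of the drag decomposes into a dissipative piece $-\int\gamma_\e f_\e|u_\e-v|^2\,dxdv$ and two residual pieces, a mollification gap $\int f_\e v\cdot(\eta_\e\star u_\e - u_\e)\,dxdv$ and a cutoff gap $\int(1-\gamma_\e)f_\e u_\e\cdot(u_\e-v)\,dxdv$, which are absorbed by Young's inequality together with the interpolation Lemma 2.5 (controlling $\rho_{f_\e}$ by $M_2 f_\e$ and $\|f_\e\|_{L^\infty}$) and the Sobolev embedding $H^1\hookrightarrow L^6$. A Gr\"onwall on a short interval $[0,T_*]$ then closes \eqref{uni_te}.

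For the mixed estimate \eqref{est_fk}, expand $|\eta_\e\star u_\e-v|(1+|v|)\leq C(|\eta_\e\star u_\e|+|v|)(1+|v|)$, and bound each piece by H\"older in $x$ combined with the first-moment inequality $m_1 f_\e\leq C(1+\|f_\e\|_{L^\infty})(m_2 f_\e)^{4/5}$ of Lemma 2.5 and the energy control \eqref{uni_te}. For the third moment, writing $|v|^3\leq |v|^2(|v-\eta_\e\star u_\e|+|\eta_\e\star u_\e|)$ reduces matters, as announced in the introduction, to bounding $\int_0^T\!\int |\eta_\e\star u_\e|\,m_2 f_\e\,dxdt$, which is handled by pairing a Ladyzhenskaya-type norm of $u_\e$ obtained from $L^\infty(L^2)\cap L^2(H^1)$ with $\|m_2 f_\e\|_{L^{6/5}_x}\leq C(M_3 f_\e)^{5/6}$ from Lemma 2.5, closing the inequality for $\int_0^{T_*}M_3 f_\e\,dt$ by Young in time. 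For the entropy, multiplying the kinetic equation by $1+\ln f_\e$ and integrating by parts yields
\[
\frac{d}{dt}\int f_\e\ln f_\e\,dxdv = 3\,M_0 f_\e + \int(\mathcal{M}(f_\e)-f_\e)\ln f_\e\,dxdv,
\]
and the BGK $H$-inequality $\int(\mathcal{M}(f_\e)-f_\e)\ln f_\e\,dxdv\leq 0$ (a Jensen computation on $\mathcal{M}(f_\e)\ln(f_\e/\mathcal{M}(f_\e))$, followed by the variational characterization of the Maxwellian) furnishes an upper bound on $\int f_\e\ln f_\e$; the $|\ln f_\e|$ part follows by splitting against $\{f_\e<e^{-|v|^2}\}$ and using $M_2 f_\e$ from \eqref{uni_te} on the small-$f_\e$ region.

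The hardest step is extracting all three estimates uniformly in $\e$: both regularizations introduce residual terms which a priori know nothing about $\e\to 0$ and must be absorbed purely by moments of $f_\e$ and Sobolev-interpolated norms of $u_\e$. This is precisely what forces the restriction to a short time $T_*$ via Gr\"onwall, and it explains the essential roles played by Lemma 2.5 and the uniform $L^\infty$ bound on $f_\e$ in closing the system of inequalities.
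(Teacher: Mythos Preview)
Your outline for part~(1) and for the $L^\infty$, mixed, and entropy estimates is correct and closely parallels the paper. For the total energy you combine the kinetic and fluid balances, whereas the paper treats $M_2 f_\e$ and $\|u_\e\|_{L^2}^2$ separately; both routes close via the same moment interpolation and Sobolev embedding, so this is only a cosmetic difference.

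The third moment argument, however, has a genuine gap. Your pointwise inequality
\[
|v|^3 \;\leq\; |v|^2\,|v-\eta_\e\star u_\e| \;+\; |v|^2\,|\eta_\e\star u_\e|
\]
does \emph{not} reduce the problem to the second term: the first contribution $\int f_\e\,|v|^2\,|v-\eta_\e\star u_\e|\,dxdv$ is itself third-order (when $u_\e\equiv 0$ it equals $M_3 f_\e$), and it is \emph{not} the mixed estimate \eqref{est_fk}, which carries weight $(1+|v|)$, not $|v|^2$. The inequality is therefore circular, and your ``closing by Young in time'' has nothing to close. The display in the introduction that you cite is the \emph{conclusion} of a nontrivial argument, not a triangle inequality. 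A direct Gr\"onwall for $M_3 f_\e(t)$ obtained by multiplying the kinetic equation by $|v|^3$ does not help either: the assumption $\int|v|^2 f_0\,dv\in L^\infty(\T^3)$ gives no uniform-in-$\e$ control on $M_3 f_{0,\e}$, and the BGK term $\int(\mathcal{M}(f_\e)-f_\e)|v|^3\,dxdv$ does not vanish.

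The paper's mechanism is to multiply the kinetic equation by the test function
\[
\Phi(x,v)=\frac{(1+|v|^2)^{1/2}\,x\cdot v}{(1+|x|^2)^{1/2}}
\]
and integrate over $\T^3\times\R^3\times[0,T_*]$. The key point is that the \emph{free transport} term $v\cdot\nabla_x f_\e$, after integration by parts in $x$, produces a quantity bounded below by $\tfrac{1}{8}\int_0^{T_*}\!\int f_\e|v|^3\,dxdv\,dt$, while every remaining term---time derivative, drift, and BGK relaxation---sees at most second-order weights, since $|\Phi|\leq C(1+|v|^2)$ and $|\nabla_v\Phi|\leq C(1+|v|)$ on the torus. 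In particular the drift term is bounded exactly by the mixed estimate \eqref{est_fk}. This gain of one velocity moment from the spatial transport is the essential idea missing from your sketch.
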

Since the proof is rather long, we divide the proof into two parts in Section \ref{sec_pro1} (Existence and Uniqueness) and Section \ref{sec_pro2} (Uniform-in-$\e$ estimates )  below.
%
%
%
%
\section{Proof of Proposition \ref{prop_ext_reg} (1): Existence of $(f_\e, u_\e)$}\label{sec_pro1}
We construct the solution $(f_\e, u_\e)$ to the regularized system (\ref{sys_reg}) as a limit of  the approximation sequence $(f^{n}_\e, u^{n}_\e)$ for the system \eqref{sys_reg} given by the following decoupled and linearized system:
\begin{align}\label{sys_rd}
\begin{aligned}
&\hspace{1cm}\pa_t f^{n+1}_\e + v \cdot \nabla_x f^{n+1}_\e + \nabla_v \cdot ((\eta_\e \star u^n_\e-v)f^{n+1}_\e) = \mathcal{M}(f^n_\e) - f^{n+1}_\e,\cr
&\pa_t u^{n+1}_\e + (\eta_\e \star u^{n+1}_\e)\cdot\nabla_x u^{n+1}_\e + \nabla_x p^{n+1}_\e - \mu\Delta_x u^{n+1}_\e= - \int_{\R^3} \gamma_\e(v)(u^n_\e-v) f^n_\e\,dv,\cr
&\hspace{4.5cm}\nabla_x\cdot u^{n+1}_\e = 0,
\end{aligned}
\end{align}
with the initial data and first iteration step:
\[
\lt(f^n_\e(x,v,t), u^n_\e(x,t) \rt)|_{t=0} = \lt(f_{0,\e}(x,v), u_{0,\e}(x)\rt) \quad \mbox{for all} \quad n \geq 1, 
\]
and
\[
\lt(f^0_\e(x,v,t), u^0_\e(x,t)\rt) = (f_{0,\e}(x,v), u_{0,\e}(x)), \quad (x,v,t) \in \T^3 \times \R^3 \times (0,T).
\]
Before we consider (\ref{sys_rd}), we consider the existence of characteristics:
\begin{lemma}\label{cha}
For $u\in L^{\infty}(0,T; L^2(\T^3))$ such that $\|u\|_{L^\infty(0,T;L^2)}<\infty$ and a fixed $\e>0$, define the backward characteristic $Z_\e(s) := (X_\e(s), V_\e(s)) := (X_\e(s;t,x,v)$, $V_\e(s;t,x,v))$ by
\begin{align}\label{eqn_rtra0}
\begin{aligned}
\frac{d}{ds}X_{\e}(s)&=V_\e(s), \cr
\frac{d}{ds}V_\e(s)&=\eta_\e \star u(X_\e(s),s)-V_\e(s),
\end{aligned}
\end{align}
with the terminal datum
\[
X_\e(t)=x \quad \mbox{and} \quad V_\e(t)=v.
\]
Then $Z_\e(s)$ is globally well-defined and satisfies
\bq\label{est_z}
|Z_\e(s)| \leq C_{T,\e,u}(1 + |v|) \quad \mbox{and} \quad |\nabla_{x,v}Z_\e(s)|\leq C_{T,\e,u},
\eq
for some positive constant $C_{T,\e,u}=C\lt(T,\e, \|u\|_{L^\infty(0,T;L^2)}\rt)$.
\end{lemma}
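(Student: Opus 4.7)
The plan is to treat \eqref{eqn_rtra0} as a classical ODE with Lipschitz right-hand side, using the fact that mollification in $x$ turns the only rough ingredient (the $L^2_x$ velocity $u$) into a smooth bounded field. First I would record the basic smoothing inequalities: for a.e.\ $s \in [0,T]$, Cauchy--Schwarz gives
\[
\|\eta_\e \star u(\cdot,s)\|_{L^\infty(\T^3)} \leq \|\eta_\e\|_{L^2(\T^3)} \|u(s)\|_{L^2(\T^3)} \leq C_\e \|u\|_{L^\infty(0,T;L^2)},
\]
and similarly $\|\nabla_x(\eta_\e \star u)\|_{L^\infty_x} \leq \|\nabla \eta_\e\|_{L^2}\|u\|_{L^\infty(0,T;L^2)}$. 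Hence the vector field $F(s,X,V) := (V,\,\eta_\e \star u(X,s) - V)$ is globally bounded in $X$, uniformly Lipschitz in $(X,V)$, with Lipschitz constant and sup-norm depending only on $\e$, $T$, and $\|u\|_{L^\infty(0,T;L^2)}$.

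Second, existence, uniqueness and extension to all $s \in [0,T]$ of the backward characteristics follow from Cauchy--Lipschitz for ODE's with Carath\'eodory right-hand side: local existence and uniqueness are immediate from the Lipschitz bound, while the linear growth in $V$ rules out blow-up in finite time via Gronwall. Only measurability in $s$ of $\eta_\e \star u$ is needed, which is automatic from $u \in L^\infty_t L^2_x$.

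Third, for the size estimate in \eqref{est_z} I would use variation of constants on the $V$-equation. Since $V_\e$ solves $V_\e' + V_\e = \eta_\e\star u(X_\e(\cdot),\cdot)$ with terminal datum $V_\e(t) = v$, one has
\[
V_\e(s) = e^{\,t-s}\, v + \int_t^s e^{\,\sigma-s}\, \eta_\e\star u(X_\e(\sigma),\sigma)\,d\sigma,
\]
and the $L^\infty$ bound on $\eta_\e \star u$ yields $|V_\e(s)| \leq e^T|v| + T e^T C_\e\|u\|_{L^\infty(0,T;L^2)}$. Integrating $X_\e'=V_\e$ from $t$ to $s$ and using $x \in \T^3$ then gives $|Z_\e(s)| \leq C_{T,\e,u}(1+|v|)$. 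For the gradient estimate, differentiating \eqref{eqn_rtra0} in $(x,v)$ produces a linear ODE $W_\e' = A_\e(s) W_\e$ for the Jacobian $W_\e := \nabla_{x,v} Z_\e$ with terminal datum the identity, whose matrix entries are uniformly bounded by $C_\e\|u\|_{L^\infty(0,T;L^2)} + 1$; Gronwall then delivers $|\nabla_{x,v}Z_\e(s)| \leq C_{T,\e,u}$.

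There is no real obstacle here — the statement is a standard ODE fact once the mollification is invoked. The one point requiring care is bookkeeping: every constant produced above is genuinely $\e$-dependent through $\|\eta_\e\|_{L^2}$ and $\|\nabla\eta_\e\|_{L^2}$, so this lemma must be used only in the construction of the approximate sequence at fixed $\e$, never in a step that passes $\e \to 0$.
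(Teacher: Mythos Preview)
Your proposal is correct and follows essentially the same route as the paper: the paper also invokes the Young/Cauchy--Schwarz bounds $\|\eta_\e\star u\|_{L^\infty}\le \e^{-3/2}\|\eta\|_{L^2}\|u\|_{L^2}$ and $\|\nabla_x(\eta_\e\star u)\|_{L^\infty}\le \e^{-5/2}\|\nabla_x\eta\|_{L^2}\|u\|_{L^2}$, writes the same variation-of-constants formula for $V_\e$, and closes both the size and Jacobian estimates by Gronwall. Your remark that all constants are genuinely $\e$-dependent is exactly the point, and the paper uses this lemma only at fixed $\e$ as you anticipate.
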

\begin{proof} The existence part is clear  due to the regularization.
For the estimate of \eqref{est_z}, we rewrite $\eqref{eqn_rtra0}$ as
\begin{align}
\begin{split}\label{cha int}
X_\e(s)&=e^{t-s}x-\int^t_s e^{\tau-s}V_\e(\tau)\,d\tau,\cr
V_\e(s)&=e^{t-s}v-\int^t_s e^{\tau-s}(\eta_\e \star u)(X_\e(\tau),\tau)\,d\tau.
\end{split}
\end{align}
A straightforward computation yields
\[
|X_\e(s)| \leq |x| + \int_0^s |V_\e(\tau)|\,d\tau \leq C + \int_0^s |Z_\e(\tau)|\,d\tau
\]
and
$$\begin{aligned}
|V_\e(s)| &\leq e^{t-s}|v| + \int^t_s e^{\tau-s} \lt|(\eta_\e \star u)(X_\e(\tau),\tau)\rt|d\tau \leq C_T|v| + \frac{C_T}{\e^{3/2}}\|\eta\|_{L^2}\| u\|_{L^2},
\end{aligned}$$
where we used
\begin{equation*}
\|\eta_\e \star u\|_{L^\infty}\leq \|\eta_\e\|_{L^2}\| u\|_{L^2}\leq \frac{1}{\e^{3/2}}\|\eta\|_{L^2}\| u\|_{L^2},
\end{equation*}
 Thus we obtain
\[
|Z_\e(s)| \leq C_T|v| + C_{T,\e,u} + \int_0^s |Z_\e(\tau)|\,d\tau,
\]
which gives
\bq\label{est_z0}
|Z_\e(s)| \leq C_{T,\e,u}(1 + |v|),
\eq
for some positive constant  $C_{T,\e,u}$ depending on $T,\e$, and $\|u\|_{L^\infty(0,T;L^2)}$. Similarly, using
\[
\|\nabla_x (\eta_\e \star u)\|_{L^\infty} \leq \|\nabla_x \eta_\e\|_{L^2}\| u\|_{L^2}
\leq \frac{1}{\e^{5/2}}\|\nabla_x \eta\|_{L^2}\| u\|_{L^2},
\]
we get
$$\begin{aligned}
|\nabla_{x,v}X_\e(s)|&\leq C + \int^s_0|\nabla_{x,v}V_\e(\tau)|\,d\tau,\cr
|\nabla_{x,v}V_\e(s)|&\leq C_T+C_T\int^s_0|(\nabla_x\eta_\e) \star u(X_\e(\tau),\tau)||\nabla_{x,v}Z_\e(\tau)| \,d\tau\cr
&\leq C_T+ \frac{C_T}{\e^{5/2}}\int^s_0 \|\nabla_x \eta\|_{L^2}\| u\|_{L^2}|\nabla_{x,v}Z_\e(\tau)|\,d\tau.
\end{aligned}$$
Thus we have
\[
|\nabla_{x,v}Z_\e(s)|\leq C_T+ C_{T,\e}\int^s_0|\nabla_{x,v}Z_\e(\tau)|\,d\tau,
\]
which, from Gronwall's inequality, yields
\[
|\nabla_{x,v}Z_\e(s)| \leq C_{T,\e,u}.
\]
Here, $C_{T,\e,u}$ is a positive constant depending on $T,\e$, and $\|u\|_{L^\infty(0,T;L^2)}$.
\end{proof}
We now state the results on existence and uniqueness of the regularized and decoupled system \eqref{sys_rd}, and its uniform bound estimates in $n$ in the proposition below.
\begin{proposition}\label{prop_ext_rd}
Let $q>5$. For any $T>0$ and $n \in \N$, there exists a unique solution $(f^n_\e, u^n_\e)$ of the regularized and decoupled system \eqref{sys_rd} such that
$f^n_\e \in L^\infty(0,T;L^{\infty}_q(\T^3 \times \R^3))$ and
$u^n_\e\in \lt(H^1(0,T;L^2(\T^3)) \cap L^2(0,T;H^1(\T^3))\rt)$. Moreover, $(f^n_\e, u^n_\e)$ satisfies the following uniform-in-$n$ estimates:
\begin{itemize}
\item[(i)] $\|f^n_\e\|_{L^\infty(\T^3 \times \R^3 \times (0,T))} \leq C_1\|f_{0,\e}\|_{L^\infty(\T^3 \times \R^3)}$,
\item[(ii)] $\|u^n_\e\|_{L^\infty(0,T; L^2(\T^3)) \cap L^2(0,T;H^1(\T^3))}<C_{2,\e} ,\quad \|\pa_t u^n_\e\|_{L^2(\T^3 \times (0,T))}\leq C_{3,\e}$,
\item[(iii)] $\|f^n_{\e}\|_{L^\infty(0,T;L^{\infty}_q(\T^3 \times \R^3))}+\|\nabla_{x,v}f^n_{\e}\|_{L^\infty(0,T;L^{\infty}_q(\T^3 \times \R^3))} \leq C_{4,\e}$,
\item[(iv)] $\rho_{f^n_{\e}}+|U_{f^n_{\e}}|+ T_{f^n_{\e}}<C_{5,\e}$, \quad$\rho_{f^n_{\e}}, T_{f^n_{\e}}>C_{6,\e}$,
\end{itemize}
Here, $C_{1}=C_1(T)$ depends only on $T$, whereas $C_{2,\e}=C_2(T,f_0,u_0,\e)$, $C_{3,\e}=C_3(T,f_0,u_0,\nabla u_0,\e)$
  and $C_{i,\e}=C_i(T,f_0,\e)~(i=4,5,6)$.
\end{proposition}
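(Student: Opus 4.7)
The strategy is a Picard-type induction on $n$, with base case $(f^0_\e, u^0_\e) = (f_{0,\e}, u_{0,\e})$. Assume $(f^n_\e, u^n_\e)$ satisfies all of (i)--(iv). The kinetic equation is linear in $f^{n+1}_\e$, and $\mathcal{M}(f^n_\e)$ is well-defined and smooth thanks to the strict positivity of $\rho_{f^n_\e}$ and $T_{f^n_\e}$ from (iv). Using Lemma \ref{cha} to obtain characteristics $Z_\e(s;t,x,v) = (X_\e,V_\e)$ for the drift $\eta_\e\star u^n_\e - v$, and observing $\nabla_v\cdot(\eta_\e\star u^n_\e - v) = -3$, the equation integrates along trajectories to the explicit representation
\begin{equation*}
f^{n+1}_\e(x,v,t) = e^{2t}f_{0,\e}(Z_\e(0;t,x,v)) + \int_0^t e^{2(t-s)}\mathcal{M}(f^n_\e)(Z_\e(s;t,x,v),s)\,ds,
\end{equation*}
which is manifestly nonnegative. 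For the fluid equation, the source $F^n_\e := -\int_{\R^3}\gamma_\e(v)(u^n_\e-v)f^n_\e\,dv$ lies in $L^2(\T^3\times(0,T))$ by compact support of $\gamma_\e$ and the $L^\infty_q$ bound on $f^n_\e$, while the convection $\eta_\e\star u^{n+1}_\e$ is divergence-free and Lipschitz; a standard Faedo--Galerkin scheme then produces a unique $u^{n+1}_\e \in H^1(0,T;L^2)\cap L^2(0,T;H^1)$. Uniqueness of $f^{n+1}_\e$ is immediate from linearity.

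For the uniform-in-$n$ bounds, I would begin with (iii). The $L^\infty_q$ estimate of Lemma \ref{M<f} and the analogous derivative estimate from the preliminaries in Section \ref{sec_pre} control $\mathcal{M}(f^n_\e)$ and $\nabla_{x,v}\mathcal{M}(f^n_\e)$ uniformly in $n$; substituting into the representation and differentiating in $(x,v)$, the characteristic bounds $|Z_\e| \leq C_{T,\e}(1+|v|)$ and $|\nabla_{x,v}Z_\e| \leq C_{T,\e}$ from Lemma \ref{cha} close a Gronwall-type inequality for $\|f^{n+1}_\e\|_{L^\infty_q} + \|\nabla_{x,v}f^{n+1}_\e\|_{L^\infty_q}$ with constants depending on $T$, $\e$, and $f_0$. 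The $L^\infty$ bound (i) then follows by combining Lemmas \ref{moments}--\ref{M<f} to obtain $\|\mathcal{M}(f^n_\e)\|_{L^\infty} \leq C\|f^n_\e\|_{L^\infty_q}$ and using this in the representation. For (ii), testing the fluid equation against $u^{n+1}_\e$ kills the convective term ($\nabla_x\cdot(\eta_\e\star u^{n+1}_\e) = 0$), leaves the dissipation $\mu\|\nabla_x u^{n+1}_\e\|_{L^2}^2$, and controls $F^n_\e \cdot u^{n+1}_\e$ by Young's inequality; Gronwall closes the energy bound. The bound on $\|\pa_t u^{n+1}_\e\|_{L^2}$ follows by reading $\pa_t u^{n+1}_\e$ off the equation and estimating each remaining term via the spatial bounds already obtained, using mollifier inequalities at the cost of $\e$-dependent constants.

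The lower bounds in (iv) rest on the Gaussian floor $\e e^{-|v|^2}$ built into $f_{0,\e}$. From the representation and the characteristic bound $|V_\e(0;t,x,v)| \leq C_{T,\e}(1+|v|)$,
\begin{equation*}
f^{n+1}_\e(x,v,t) \geq e^{2t}\,\e\, e^{-|V_\e(0;t,x,v)|^2} \geq \e\,e^{2t}\, e^{-C_{T,\e}(1+|v|)^2}.
\end{equation*}
Integrating in $v$ yields $\rho_{f^{n+1}_\e}(x,t) \geq c(T,\e) > 0$; the inequality $\rho_{f^{n+1}_\e}|U_{f^{n+1}_\e}| \leq C\|f^{n+1}_\e\|_{L^\infty_q}$ together with this lower bound gives $|U_{f^{n+1}_\e}| \leq C_{T,\e}$, so the integral defining $T_{f^{n+1}_\e}$ is likewise bounded below by a strictly positive constant. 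The upper bounds on $\rho$, $|U|$, $T$ are immediate consequences of the $L^\infty_q$ bound in (iii).

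The principal difficulty is the circular coupling between (iii) and (iv): the derivative control of $\mathcal{M}(f^n_\e)$ requires strict positivity of $\rho_{f^n_\e}$ and $T_{f^n_\e}$, while these positivity statements are themselves propagated via a representation whose characteristics depend on $u^n_\e$ (and hence indirectly on all previous iterates). Consequently all four bounds must be carried \emph{jointly} in a single inductive step, with every constant tracked explicitly in $T$, $\e$, and the initial data; uniformity with respect to $\e$ is deliberately forfeited here and is recovered separately via the energy and moment arguments of Section \ref{sec_pro2}.
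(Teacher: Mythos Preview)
Your overall architecture (induction on $n$, the mild formula via characteristics, Faedo--Galerkin for the fluid, the Gaussian floor for lower bounds) matches the paper. But the \emph{order} in which you propagate the four estimates is not innocent, and as written your scheme fails to deliver the proposition as stated.

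First, deriving (i) from (iii) loses the crucial $\e$-independence of $C_1$. You bound $\|\mathcal{M}(f^n_\e)\|_{L^\infty}$ by $C\|f^n_\e\|_{L^\infty_q}$, and since your $L^\infty_q$ constant $C_{4,\e}$ depends on $\e$, so will your $L^\infty$ bound. The paper instead invokes the $q=0$ case of Lemma~\ref{M<f}, which gives $\|\mathcal{M}(f^n_\e)\|_{L^\infty}\le C_0\|f^n_\e\|_{L^\infty}$ with an absolute constant; inserting this into the mild formula yields a closed linear recursion in $\|f^n_\e\|_{L^\infty}$ alone and hence $C_1=C_1(T)$ only. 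This $\e$-independence is singled out in Remark~\ref{rmk_f_inf} and is used essentially in Section~\ref{sec_pro2}.

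Second, you begin the uniform estimates with (iii), but the characteristic bounds from Lemma~\ref{cha} carry the constant $C_{T,\e,u}=C(T,\e,\|u^n_\e\|_{L^\infty(0,T;L^2)})$. Until (ii) is established uniformly in $n$, this constant is $n$-dependent, and your Gronwall for $\|f^{n+1}_\e\|_{L^\infty_q}+\|\nabla_{x,v}f^{n+1}_\e\|_{L^\infty_q}$ does not close with $n$-independent constants. The paper's order is (i) $\to$ (ii) $\to$ (iii) $\to$ (iv): step (i) uses only the mild formula and Lemma~\ref{M<f} with $q=0$ (no characteristic magnitudes needed); step (ii) uses (i) together with the velocity cut-off $\gamma_\e$ to bound the drag force; only then are the constants in Lemma~\ref{cha} uniform in $n$, after which (iii) goes through. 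Two smaller points: your plan to ``read $\partial_t u^{n+1}_\e$ off the equation'' requires $\Delta_x u^{n+1}_\e\in L^2$, i.e.\ $H^2$ regularity, which you do not have; the paper tests the equation against $\partial_t u^{n+1}_\e$ instead. And for the lower bound on $T_{f^{n+1}_\e}$, bounding $|U|$ is not enough---the paper applies Lemma~\ref{moments} directly to convert the lower bound on $\rho_{f^{n+1}_\e}$ and the upper bound on $\|f^{n+1}_\e\|_{L^\infty_q}$ into $T_{f^{n+1}_\e}\ge c_\e>0$.
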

\begin{remark}\label{rmk_f_inf} The upper bound estimate of $f^n_\e$ in $L^\infty(\T^3 \times \R^3 \times (0,T))$ does not depend on both $\e$ and $n$.
\end{remark}

%
%
%
\begin{proof}
We prove this proposition using induction.  The case $n=0$ is trivially satisfied. Assume that we have obtained  $(f^n, u^n) \in L^\infty(\T^3 \times \R^3 \times (0,T)) \times L^\infty(0,T; L^2(\T^3))$
that satisfies all the statement of Proposition \ref{sys_rd}.\newline
\noindent{\bf (1) Existence and uniqueness of $(f^{n+1}_\e, u^{n+1}_\e)$:}
 Under the assumption $(f^n, u^n) \in L^\infty(\T^3 \times \R^3 \times (0,T)) \times L^\infty(0,T; L^2(\T^3))$,
 $\eqref{sys_rd}_1$ can be seen as an inhomogeneous transport equation:
\begin{align}\label{linear trans}
\pa_t f^{n+1}_\e + v \cdot \nabla_x f^{n+1}_\e + (\eta_\e \star u^n_\e-v)\cdot\nabla_vf^{n+1}_\e-2f^{n+1}_\e =
\mathcal{M}(f^n_\e).
\end{align}
Thus, in view of the uniform bound on $\mathcal{M}(f^n_\e)$ given by Lemma \ref{M<f}, the  existence  follows straightforwardly once well-posedness of the  characteristic:
\[Z^{n+1}_\e(s) := (X^{n+1}_\e(s), V^{n+1}_\e(s)) := (X^{n+1}_\e(s;t,x,v), V^{n+1}_\e(s;t,x,v))\]
defined by
\begin{align}\label{eqn_rtra1}
\begin{aligned}
\frac{d}{ds}X^{n+1}_{\e}(s)&=V^{n+1}_\e(s), \quad 0 \leq s \leq T, \\[1mm]
\frac{d}{ds}V^{n+1}_\e(s)&=\eta_\e \star u^{n}_\e(X^{n+1}_\e(s),s)-V^{n+1}_\e(s),
\end{aligned}
\end{align}
with the terminal datum
\[
X^{n+1}_\e(t)=x \quad \mbox{and} \quad V^{n+1}_\e(t)=v,
\]
is verified, which is provided by Lemma \ref{cha} below.

On the other hand the assumption $(f^n, u^n) \in L^\infty(\T^3 \times \R^3 \times (0,T)) \times L^\infty(0,T; L^2(\T^3))$ together with the high-velocity cut-off function $\gamma_\e(v)$ implies that the drag forcing term in the fluid part belongs to $L^2(\T^3 \times (0,T))$ at least. Thus, by a standard existence theory of incompressible Navier-Stokes equations with a mollified convection term, we can obtain the global-in-time existence and uniqueness of solution $u_\e^{n+1}$ solving the fluid part in \eqref{sys_rd} with the regularity mentioned in Proposition \ref{prop_ext_rd}.\newline

\noindent{\bf (2) Uniform bound estimates in $n$: }\label{ssec_uni_n} We now prove the uniform-in-$n$ bounds  in Proposition \ref{prop_ext_rd}. 

\noindent$\bullet$ \emph{\bf Estimate of $\|f^n_\e(t)\|_{L^\infty}$}:  Integrating (\ref{linear trans}) along the characteristic defined in \eqref{eqn_rtra1}, we get  the mild form:
\begin{align}\label{mild e}
f^{n+1}_\e(x,v,t)=e^{2t}f_{0,\e}(Z^{n+1}_\e(0))+\int^t_0e^{2(t-s)}\mathcal{M}(f^n_\e)\lt(Z^{n+1}_\e(s),s \rt)ds.
\end{align}
Then, Lemma \ref{M<f} gives
$$\begin{aligned}
\|f^{n+1}_\e(t)\|_{L^\infty} &\leq \|f_{0,\e}\|_{L^\infty}e^{2T} + e^{2T}\int_0^t\|\mathcal{M}(f^n_\e)(s)\|_{L^\infty}\,ds\cr
 &\leq C_T \|f_{0,\e}\|_{L^\infty}  + C_T \int_0^t \|f^n_\e(s)\|_{L^\infty}\,ds.
\end{aligned}$$
Therefore,
\bq\label{est_fni}
\sup_{0 \leq t \leq T}\|f^n_\e(t)\|_{L^\infty} \leq C_T\|f_{0,\e}\|_{L^\infty} \quad \mbox{for} \quad n \geq 1.
\eq
\noindent$\bullet$ \emph{\bf Estimate of $\|u^n_\e(t)\|_{L^\infty(0,T:L^2)}$
and $\|\partial_tu^n_\e(t)\|_{L^2(0,T:L^2)}$}: Multiplying \eqref{sys_rd} by $u^{n+1}_\e$ and integrating it over $\T^3$ gives
\bq\label{est_un1}
\frac12\frac{d}{dt}\int_{\T^3} |u^{n+1}_\e|^2\,dx + \mu \int_{\T^3} |\nabla u^{n+1}_\e|^2\,dx = -\int_{\T^3 \times \R^3} \gamma_\e(v)(u^n_\e - v)f^n_\e \cdot u^{n+1}_\e\,dxdv
\eq
due to
\[
\int_{\T^3} (\eta_\e \star u^{n+1}_\e)\cdot\nabla_x u^{n+1}_\e \cdot u^{n+1}_\e\,dx = 0.
\]
On the other hand, the term on the right hand side of \eqref{est_un1} can be estimated as
$$\begin{aligned}
\lt|\int_{\T^3 \times \R^3} \gamma_\e(v)(u^n_\e - v)f^n_\e \cdot u^{n+1}_\e\,dxdv\rt|
&\leq C_\e \|f^n_\e\|_{L^\infty}\lt(1 + \|u^n_\e\|_{L^2}^2 + \|u^{n+1}_\e\|_{L^2}^2 \rt)\cr
& \leq C_{T,\e}\lt(1  + \|u^{n}_\e\|_{L^2}^2+\|u^{n+1}_\e\|_{L^2}^2 \rt),
\end{aligned}$$
thanks to \eqref{est_fni} and the cut-off function $\gamma_\e$. Thus we have
\[
\frac12\frac{d}{dt} \|u^{n+1}_\e\|_{L^2}^2 + \mu \|\nabla u^{n+1}_\e\|_{L^2}^2  \leq C_{T,\e}
\lt(1 + \|u^{n}_\e\|_{L^2}^2+\|u^{n+1}_\e\|_{L^2}^2 \rt),
\]
and this gives the uniform bound of $u^n_\e$ in $L^\infty(0,T;L^2(\T^3)) \cap L^2(0,T;H^1(\T^3))$.
Now we turn to the estimate of $\|\partial_tu^n_\e(t)\|_{L^2(0,T:L^2)}$. For this, we multiply \eqref{sys_rd} by $\partial_tu^n_\e(t)$, integrate over $x$,
  and use a similar argument as above to  derive
$$\begin{aligned}
\int_{\T^3} |\pa_t u^{n+1}_\e|^2\,dx + \frac\mu2 \frac{d}{dt}\int_{\T^3} |\nabla_x u^{n+1}_\e|^2\,dx &= -\int_{\T^3 \times \R^3} \gamma_\e(v)(u^n_\e - v)f^n_\e \cdot \pa_t u^{n+1}_\e\,dxdv\cr
&\leq C_\e\|\pa_t u^{n+1}_\e\|_{L^2}\cr
&\leq C_\e + \frac12\|\pa_t u^{n+1}_\e\|_{L^2}^2.
\end{aligned}$$
Integrating the above inequality with respect to time, we obtain
\[
\|\pa_t u^{n+1}_\e\|_{L^2(0,T;L^2)}^2 + \mu \|\nabla_x u^{n+1}_\e\|_{L^\infty(0,T;L^2)}^2 \leq C_\e T + \mu\|\nabla_x u_{0,\e}\|_{L^2}^2,
\]
which gives $\|\pa_t u^{n+1}_\e\|_{L^2(0,T;L^2)} \leq C(\e)$. \newline

\noindent$\bullet$ \emph{\bf Estimate of $\|f^n_{\e}\|_{L^\infty(0,T;L^{\infty}_q)}+\|\nabla_{x,v}f^n_{\e}\|_{L^\infty(0,T;L^{\infty}_q)}$}:  Let us take $C_{T,\e} > 0$ such that
\[
\frac{e^{2T}}{\e^{3/2}}\int^T_0\|\eta\|_{L^2}\| u^n_{\e}\|_{L^2}d\tau  \leq C_{T,\e}.
\]
Note that the constant above $C_{T,\e}$ does not depend on $n$ due to the uniform bound estimate of $u^n_\e$ in the previous part. Then it follows from \eqref{cha int} that
\[
|V^{n+1}_\e(t)| \geq |v| - \frac{e^{2T}}{\e^{3/2}}\int^T_0\|\eta\|_{L^2}\| u^n_{\e}\|_{L^2}d\tau \geq |v|  - C_{T,\e},
\]
that is,
\[
1 + C_{T,\e} + |V^{n+1}_\e(t)| \geq 1 + |v| \quad \mbox{for} \quad n \geq 1 \quad \mbox{and} \quad 0 \leq t \leq T.
\]
Using the above estimate, we find
$$\begin{aligned}
f_{0,\e}(Z^{n+1}_\e(0))&= f_{0,\e}(Z^{n+1}_\e(0))(1 + C_{T,\e} + |V^{n+1}_\e(0)|)^{q} (1 + C_{T,\e} + |V^{n+1}_\e(0)|)^{-q}\cr
&\leq C_{T,\e,q}\|f_{0,\e}\|_{L^{\infty}_q}(1+|v|)^{-q},
\end{aligned}$$
for $0 < q < \infty$. Similarly, with the aid of Lemma \ref{M<f}, we estimate
$$\begin{aligned}
\mathcal{M}(f^n_\e)(Z^{n+1}_\e(\tau),\tau)
&\leq\mathcal{M}(f^n_\e)(Z^{n+1}_\e(\tau),\tau)(1 + C_{T,\e} +|V^{n+1}_\e(\tau)|)^{q}
(1 + C_{T,\e}+|V^{n+1}_\e(\tau)|)^{-q}\cr
&\leq C_{T,\e,q}\|\mathcal{M}(f^n_\e)\|_{L^{\infty}_q}(1+|v|)^{-q}\cr
&\leq C_{T,\e,q}\|f^n_\e\|_{L^{\infty}_q}(1+|v|)^{-q}.
\end{aligned}$$
Combining all the above estimate, we have
\begin{align*}
|f^{n+1}_\e(x,v,t)|\leq C_{T,\e,q}\|f_{0,\e}\|_{L^{\infty}_q}(1+|v|)^{-q}+C_{T,\e,q}\int^t_0\|f^n_\e(s)\|_{L^{\infty}_q}(1+|v|)^{-q}ds.
\end{align*}
This readily gives
\bq\label{sum1}
\|f^{n+1}_\e(t)\|_{L^{\infty}_q} \leq C_{T,\e,q}\|f_{0,\e}\|_{L^{\infty}_q}+C_{T,\e,q}\int^t_0 \|f^n_\e(s)\|_{L^{\infty}_q} \,ds.
\eq
We next estimate the first-order derivative for $f^{n+1}_\e$. Note that the estimate in Lemma \ref{cha} is now uniform in $n$ due to the uniform bound estimate of $u^n_\e$ in $L^\infty(0,T;L^2(\T^3))$. This and using the similar argument as the above yield
$$\begin{aligned}
&|\nabla_{x,v}f^{n+1}_\e(x,v,t)|\cr
&\qquad\leq e^{2t}|\nabla_{x,v}f_{0,\e}(Z^{n+1}_\e(0))||\nabla_{x,v}Z^{n+1}_\e(0)| \cr
&\qquad +\int^t_0e^{2(t-s)}|\nabla_{x,v}\mathcal{M}(f^n_\e)\big(Z^{n+1}_\e(s),s\big)||\nabla_{x,v}Z^{n+1}_\e(s)|\,ds\cr
&\qquad \leq C_{T,\e}\|\nabla_{x,v}f_{0,\e}\|_{L^{\infty}_q}(1+|v|)^{-q}+C_{T,\e}\int^t_0\|\nabla_{x,v}\mathcal{M}(f^n_\e)\|_{L^{\infty}_q}(1+|v|)^{-q}\,ds\cr
&\qquad\leq C_{T,\e}\|\nabla_{x,v}f_{0,\e}\|_{L^{\infty}_q}(1+|v|)^{-q}+C_{T,\e}\int^t_0\left(\|f^n_\e\|_{L^{\infty}_q}+\|\nabla_{x,v}f^n_\e\|_{L^{\infty}_q}\right)
(1+|v|)^{-q}\,ds.
\end{aligned}$$
Hence we obtain
\begin{align}\label{sum2}
\|\nabla_{x,v}f^{n+1}_\e\|_{L^{\infty}_q}\leq
C_{T,\e}\|\nabla_{x,v}f_{0,\e}\|_{L^{\infty}_q}+C_{T,\e}\int^t_0\left(\|f^n_\e\|_{L^{\infty}_q}+\|\nabla_{x,v}f^n_\e\|_{L^{\infty}_q}\right)ds.
\end{align}
Combining (\ref{sum1}) and (\ref{sum2}), we have
$$\begin{aligned}
&\|f^{n+1}_\e(t)\|_{L^{\infty}_q}+\|\nabla_{x,v}f^{n+1}_\e(t)\|_{L^{\infty}_q}\cr&\qquad \leq C_{T,\e}\lt(\|f_{0,\e}\|_{L^{\infty}_q}+\|\nabla_{x,v}f_{0,\e}\|_{L^{\infty}_q}\rt)+C_{T,\e}\int^t_0\left(\|f^n_\e\|_{L^{\infty}_q}+\|\nabla_{x,v}f^n_\e\|_{L^{\infty}_q}\right)ds,
\end{aligned}$$
which yields the desired result.  \newline

\noindent$\bullet$ \emph{\bf Estimates of macroscopic fields of $f^n_\e$}: We show that macroscopic fields of $f$ satisfy $\rho_{f^n_{\e}}+|U_{f^n_{\e}}|+ T_{f^n_{\e}}<C_{T,\e}$ and $\rho_{f^n_{\e}}, T_{f^n_{\e}}>C_{T,\e}$ for some positive constant $C_{T,\e}$. For this, we take into account the integration of \eqref{mild e} and recall how we regularized $f_0$ to see
\[
\intr f^{n}_\e(x,v,t)\,dv \geq e^{2t}\intr f_0(Z^n_\e(0))\,dv \geq \intr \e e^{-|V^n_\e(0)|^2}dv \geq \intr \e e^{-C_{T,\e}(1+|v|)^2}dv \geq C_{T,\e},
\]
where we used \eqref{est_z0} together with the uniform estimate of $u^n_\e$. This gives the lower bound for $\rho_{f^n_\e}$. Then, the lower bound for $T_{f^n_\e}$ follows directly from Lemma \ref{moments}. The upper bounds can be easily obtained.
\end{proof}
%
%
%
%
\subsection{Proof of Proposition \ref{prop_ext_reg}. (1)} We are now ready to prove the existence
and uniqueness of $(f_\e, u_\e)$ stated in Proposition \ref{prop_ext_reg}. (1).
We split the proof into four steps as follows. \newline
\noindent\emph{\bf Step A.- Cauchy estimate for $f^n$}:  It follows from \eqref{mild e} that
$$\begin{aligned}
f^{n+1}_\e(x,v,t)-f^{n}_\e(x,v,t)&=\int^t_0e^{2(t-s)}\lt(\mathcal{M}(f^n_\e)\lt(Z^{n+1}_\e(s),s\rt) -\mathcal{M}(f^{n-1}_\e)\lt(Z^n_\e(s),s\rt)\rt)ds\cr
&=\int^t_0e^{2(t-s)}\lt(\mathcal{M}(f^n_\e)\lt(Z^{n+1}_\e(s),s\rt)
-\mathcal{M}(f^{n}_\e)\lt(Z^n_\e(s),s\rt)\rt)ds\cr
&\quad +\int^t_0e^{2(t-s)}\lt(\mathcal{M}(f^n_\e)\lt(Z^n_\e(s),s\rt)
-\mathcal{M}(f^{n-1}_\e)\lt(Z^n_\e(s),s\rt)\rt)ds\cr
&=: I_1 + I_2,
\end{aligned}$$
where $I_i,(i=1,2)$ can be estimate as follows.
$$\begin{aligned}
I_1 &=\int_0^t e^{2(t-s)} \nabla_{x,v} \mathcal{M}(f^n_\e)\lt( \alpha Z^{n+1}_\e(s) + (1-\alpha)Z^n_\e(s),s \rt) \cdot (Z^{n+1}_\e(s) - Z^n_\e(s))\,ds \cr
&\leq C_{T,\e}\int_0^t \|\nabla_{x,v}\mathcal{M}(f^n_\e)\|_{L^\infty_q}|Z^{n+1}_\e(s) - Z^n_\e(s)|\,ds (1 + |v|)^{-q} \cr
&\leq C_{T,\e} (1 + |v|)^{-q} \int_0^t \lt(\|f^n_\e(s)\|_{L^\infty_q} + \|\nabla_{x,v} f^n_\e(s)\|_{L^\infty_q} \rt) |Z^{n+1}_\e(s) - Z^n_\e(s)|\,ds, \cr
I_2 &\leq C_{T,\e}(1+|v|)^{-q} \int_0^t \|(f^n_\e-f^{n-1}_\e)(s)\|_{L^{\infty}_q}\,ds.
\end{aligned}$$
Here we used Lemma \ref{h} and the similar argument as in the proof of Proposition \ref{prop_ext_rd}. This yields
\[
\|(f^{n+1}_\e-f^{n}_\e)(t)\|_{L^{\infty}_q} \leq C_{T,\e}\int^t_0\|(f^n_\e-f^{n-1}_\e)(s)\|_{L^{\infty}_q}\,ds +C_{T,\e}\int^t_0|Z^{n+1}_\e(s)-Z^n_\e(s)|\,ds.
\]
\noindent\emph{\bf Step B.- Cauchy estimate for the characteristic $Z^{n+1}_\e$}: We first find from \eqref{eqn_rtra1} that
\[
|X^{n+1}_\e(s)-X^n_\e(s)| \leq \int^t_s |V^{n+1}_\e(\tau)-V^n_\e(\tau)|\,d\tau.
\]
We next estimate the characteristic for velocity as
$$\begin{aligned}
|V^{n+1}_\e(s)-V^n_\e(s)|&\leq\int^t_se^{\tau-s}
\left|\eta_\e \star u^n_\e(X^{n+1}_\e(\tau),\tau)-\eta_\e \star u^{n}_\e(X^n_\e(\tau),\tau)\right|d\tau  \cr
&\quad +\int^t_se^{\tau-s} \left|\eta_\e \star u^{n}_\e(X^n_\e(\tau),\tau)-\eta_\e \star u^{n-1}_\e(X^n_\e(\tau),\tau)\right|d\tau\cr
&\leq C_{T,\e}\int^t_s
\|\nabla\eta_\e\|_{L^2} \| u^n_\e\|_{L^2}|X^{n+1}_\e(\tau)-X^n_\e(\tau)|d\tau\cr
& \quad+C_T\int^T_0\|\eta_\e\|_{L^2} \| (u^n_\e-u^{n-1}_\e)(\tau)\|_{L^2}d\tau\cr
&\leq C_{T,\e}\int^T_0 |X^{n+1}_\e(\tau)-X^n_\e(\tau)| +\| (u^n_\e-u^{n-1}_\e)(\tau)\|_{L^2}\,d\tau,
\end{aligned}$$
where we used the uniform bound estimate of $\|u^n_\e\|_{L^\infty(0,T;L^2)}$ in $n$. Thus we have
$$\begin{aligned}
|Z^{n+1}_\e(s)-Z^n_\e(s)|
\leq C_{T,\e}\int^T_0 |Z^{n+1}_\e(\tau)-Z^n_\e(\tau)| \,d\tau+C_{T,\e}\int^T_0\| (u^n_\e-u^{n-1}_\e)(\tau)\|_{L^2}d\tau.
\end{aligned}$$
\noindent\emph{\bf Step C.- Cauchy estimate for the fluid velocity $u^n$}: For notational simplicity, we set $w^{n+1}_\e := u^{n+1}_\e - u^n_\e$. Then it follows from $\eqref{sys_rd}_2$ that $w^{n+1}$ satisfies
\begin{align}\label{est_un}
\begin{aligned}
&\pa_t w^{n+1}_\e + (\eta_\e \star w^{n+1}_\e) \cdot \nabla_x u^{n+1}_\e + (\eta_\e \star u^n_\e) \cdot \nabla_x w^{n+1}_\e + \nabla_x (p^{n+1}_\e - p^n_\e) - \mu \Delta_x w^{n+1}_\e \cr
&\qquad = -\intr \gamma_\e(v)w^n_\e f^n_\e \,dv -\intr \gamma_\e(v) (u^{n-1}_\e- v) (f^n_\e - f^{n-1}_\e)\,dv
\end{aligned}
\end{align}
and $\nabla_x \cdot w^{n+1}_\e = 0$. Multiplying \eqref{est_un} by $w^{n+1}_\e$ and integrating it over $\T^3$ gives
$$\begin{aligned}
&\frac12\frac{d}{dt}\|w^{n+1}_\e\|_{L^2}^2 + \mu \|\nabla_x w^{n+1}_\e\|_{L^2}^2 \cr
&\qquad = -\int_{\mathbb{T}^3} (\eta_\e \star w^{n+1}_\e) \cdot \nabla_x u^{n+1}_\e \cdot w^{n+1}_\e \,dx - \int_{\T^3 \times \R^3} \gamma_\e(v)w^n_\e \cdot w^{n+1}_\e f^n_\e\,dxdv\cr
&\qquad \quad - \int_{\T^3 \times \R^3} \gamma_\e(v) (u^{n-1}_\e -v) \cdot w^{n+1}_\e(f^n_\e - f^{n-1}_\e)\,dxdv\cr
&\qquad =: J_1 + J_2 + J_3,
\end{aligned}$$
thanks to
\[
\int_{\T^3} (\eta_\e \star u^n_\e) \cdot \nabla_x w^{n+1}_\e \cdot w^{n+1}_\e\,dx = 0.
\]
We then estimate $J_i~ (i=1,2,3)$ as
$$\begin{aligned}
J_1 &= \int_{\mathbb{T}^3} (\eta_\e \star w^{n+1}_\e) \cdot \nabla_x w^{n+1}_\e \cdot u^{n+1}_\e \,dx \leq C_\e\|u^{n+1}_\e\|_{L^2}\|w^{n+1}_\e\|_{L^2}\|\nabla_x w^{n+1}_\e\|_{L^2},\cr
J_2 &\leq C_\e\|f^n_\e\|_{L^\infty}\|w^n_\e\|_{L^2} \|w^{n+1}_\e\|_{L^2},\cr
J_3 &\leq C_\e (1 + \|u^{n-1}_\e\|_{L^2})\|w^{n+1}_\e\|_{L^2}\|f^n_\e - f^{n-1}_\e\|_{L^\infty}.
\end{aligned}$$
This, together with the uniform bound estimate of $(f^n_\e, u^n_\e)$ in $n$ implies
\[
\frac{d}{dt}\|w^{n+1}_\e\|_{L^2}^2 + \mu \|\nabla_x w^{n+1}_\e\|_{L^2}^2  \leq C_{T,\e}\lt(\|w^n_\e\|_{L^2}^2 + \|w^{n+1}_\e\|_{L^2}^2 + \|f^n_\e - f^{n-1}_\e\|_{L^\infty}^2\rt).
\]
\noindent\emph{\bf Step D.- Cauchy estimate for $(f^n_\e, u^n_\e, Z^n_\e)_{n \in \N}$}: Combining the estimates in previous steps, we have
$$\begin{aligned}
&\|f^{n+1}_\e(t)-f^{n}_\e(t)\|_{L^{\infty}_q}+\|Z^{n+1}_\e(t)-Z^n_\e(t)\|_{L^\infty}+\|u^{n+1}_\e(t)-u^n_\e(t)\|_{L^2}\cr
&\hspace{1cm}\leq C_{T,\e} \int^T_0\|f^{n}_\e(\tau)-f^{n-1}_\e(\tau)\|_{L^{\infty}_q}+\|Z^{n+1}_\e(\tau)-Z^n_\e(\tau)\|_{L^\infty}+\|u^n_\e(\tau)-u^{n-1}_\e(\tau)\|_{L^2}\,d\tau,
\end{aligned}$$
from which we can conclude that$(f^n_\e, u^n_\e)_{n \in \N}$ is a Cauchy sequence in $L^\infty(0,T;L^\infty_q(\T^3 \times \R^3))) \times L^\infty(0,T;L^2(\T^3))$. Therefore, for a fixed $\e > 0$, there exist limiting functions $f_\e,u_\e, Z_\e$ such that
\begin{align*}
\sup_{0 \leq t \leq T}\lt(\|f^{n}_\e(t)-f_\e(t)\|_{L^{\infty}_q}+\|Z^n_\e(t)-Z_\e(t)\|_{L^\infty}+\|u^n_\e(t)-u_\e(t)\|_{L^2}\rt)\rightarrow 0 \quad \mbox{as} \quad n \to \infty.
\end{align*}
Then, by a standard argument, we can easily show that $(f_\e,Z_\e,u_\e)$ solve the regularized system \eqref{sys_reg}.
%
%

%
%
%
%
%
\section{Proof of Proposition \ref{prop_ext_reg}. (2): Uniform-in-$\e$ estimates on $(f_\e, u_\e)$ }\label{sec_pro2}
In this section, we establish several uniform-in-$\e$ estimates for $(f_\e, u_\e)$ given in Proposition \ref{prop_ext_reg}. (2). For notational simplicity, we drop the subscript $f$ in $\rho_{f_\e}, U_{f_\e}$, and $T_{f_\e}$ when there is no confusion, i.e., we denote by $\rho_\e := \rho_{f_\e}$, $U_\e:= U_{f_\e}$, and $T_\e := T_{f_\e}$.

\noindent $\bullet$ {\bf Uniform bounds of the total energy:} A straightforward computation yields from $\eqref{sys_reg}_1$ that
\[
\frac{d}{dt}M_2 f_\e + 2M_2 f_\e \leq 2\int_{\T^3} |\eta_\e \star u_\e(x,t)| m_1 f_\e\,dx.
\]
This together with Lemma \ref{lem_mom}; $m_1f_\e\leq C(m_2f_\e)^{4/5}$, Minkowski's inequality; $\|\eta_\e \star u_\e (t)\|_{L^5}\leq C\|u_\e (t)\|_{L^5}$, and the uniform bound estimate of $\|f_\e\|_{L^\infty}$ in Proposition \ref{prop_ext_rd}, (see also Remark \ref{rmk_f_inf}) gives
\begin{equation}\label{M2}
\frac{d}{dt}M_2 f_\e + 2M_2 f_\e \leq \|\eta_\e \star u_\e (t)\|_{L^5}\|m_1 f_\e(t)\|_{L^{5/4}} \leq C\|u_\e(t)\|_{L^5}(M_2 f_\e)^{4/5}.
\end{equation}
Applying Gronwall's inequality, we obtain
\bq\label{est_fe}
M_2f_\e(t) \leq  C\lt( (M_2 f_{0,\e})^{1/5} + \int_0^t \|u_\e(s)\|_{L^5}\,ds \rt)^5 \leq C\lt(1  + \int_0^t \|u_\e(s)\|_{L^5}\,ds \rt)^5,
\eq
due to $M_2 f_{0,\e} \leq CM_2 f_0$, where $C > 0$ is independent of $\e$. We next turn to the uniform estimate of the fluid velocity. For this, we multiply $\eqref{sys_reg}_2$ by $u_\e$, integrate over $x$ to get
\begin{align*}
\begin{split}
\frac12\frac{d}{dt}\|u_\e\|_{L^2}^2 + \mu\|\nabla_x u_\e\|_{L^2}^2 &= -\int_{\T^3 \times \R^3} f_\e u_\e \cdot (u_\e - v)\gamma_\e(v)\,dxdv\cr
&= -\int_{\T^3 \times \R^3} f_\e |u_\e|^2 \gamma_\e(v)\,dxdv + \int_{\T^3 \times \R^3} f_\e u_\e \cdot v \gamma_\e(v)\,dxdv\cr
&\leq \int_{\T^3} |u_\e|m_1 f_\e\,dx.
\end{split}
\end{align*}
Then, by using the argument in (\ref{M2}) and (\ref{est_fe}), we can bound the last term as
\begin{align}\label{here}
\begin{split}
\int_{\T^3} |u_\e|m_1 f_\e\,dx&\leq \|u_\e\|_{L^5}\|m_1 f_\e\|_{L^{5/4}}\cr
&\leq  C\|u_\e(t)\|_{L^5}(M_2 f_\e)^{4/5}\cr
&\leq C\|u_\e\|_{L^5}\lt(1 + \int_0^t \|u_\e(s)\|_{L^5}\,ds \rt)^4\cr
&\leq C\|u_\e\|_{H^1}\lt(1 + \lt(\int_0^t \|u_\e(s)\|_{H^1}^2\,ds\rt)^{1/2} \rt)^4,
\end{split}
\end{align}
where we used the Sobolev embedding $L^5(\T^3) \hookrightarrow H^1(\T^3)$ in the last line.
We then use the Young's inequality to get
$$\begin{aligned}
&\frac12\frac{d}{dt}\|u_\e\|_{L^2}^2 + \mu\|\nabla_x u_\e\|_{L^2}^2\cr
 &\hspace{1.5cm}\leq C + C\|u_\e\|_{L^2}^2 + \frac{\mu}{2}\|\nabla_x u_\e\|_{L^2}^2 +  C\int_0^t  \|u_\e(s)\|_{L^2}^8 \,ds+ C\lt(\int_0^t \|\nabla_x u_\e(s)\|_{L^2}^2 \,ds\rt)^4.
\end{aligned}$$
Integrating the above inequality over the time interval $[0,t]$, we find
$$\begin{aligned}
\|u_\e\|_{L^2}^2 + \mu\int_0^t \|\nabla_x u_\e(s)\|_{L^2}^2\,ds &\leq \|u_{0,\e}\|_{L^2}^2 + C + C\int_0^t \|u_\e(s)\|_{L^2}^2\,ds + C\int_0^t \|u_\e(s)\|_{L^2}^8\,ds \cr
&\quad + C\int_0^t \lt(\int_0^s \|\nabla_x u_\e(\tau)\|_{L^2}^2 \,d\tau \rt)^4 ds.
\end{aligned}$$
We then apply the Gronwall's inequality to obtain that there exists a $0 < T_* \leq T$ such that
\bq\label{uni_ue}
\|u_\e(t)\|_{L^2}^2 + \int_0^t \|\nabla_x u_\e(s)\|_{L^2}^2\,ds \leq C \quad \mbox{for} \quad 0 \leq t \leq T_*,
\eq
due to $\|u_{0,\e}\|_{L^2} \leq C\|u_0\|_{L^2}$, where $C>0$ is independent of $\e$. We also combine \eqref{est_fe} and \eqref{uni_ue} to have
\begin{equation}\label{M2f bound}
M_2 f_\e(t) \leq C \quad \mbox{for} \quad 0 \leq t \leq T_*,
\end{equation}
where $C>0$ is independent of $\e$.\newline
\noindent $\bullet$ {\bf Uniform bound of $\|(\eta_\e \star u_\e - v)(1+|v|)f_\e\|_{L^1}$:} We divide the integral as
\begin{align*}
&\|(\eta_\e \star u_\e - v)(1+|v|)f_\e\|_{L^1}\cr
&\qquad=\int_0^{T_*} \int_{\T^3 \times \R^3} |(\eta_\e \star u_\e - v)|f_\e\,dxdvdt +\int_0^{T_*} \int_{\T^3 \times \R^3} |(\eta_\e \star u_\e - v)||v|f_\e\,dxdvdt\cr
&\qquad=: I_1+I_2,
\end{align*}
and estimate $I_1$ and $I_2$ separately. For the estimate of $I_1$, we first note that
$$\begin{aligned}
I_1 &\leq \int_0^{T_*} \int_{\T^3} |\eta_\e \star u_\e| \rho_\e\,dxdt + \int_0^{T_*} \int_{\T^3} m_1f_\e\,dxdt\cr
& \leq \int_0^{T_*} \|u_\e(t)\|_{L^{5/2}}\|\rho_\e(t)\|_{L^{5/3}}\,dt + C\int_0^{T_*} \|m_1 f_\e\|_{L^{5/4}}\,dt,
\end{aligned}$$
where the second term on the right hand side of the above inequality can be uniformly bounded as
$$\begin{aligned}
\|m_1 f_\e\|_{L^{5/4}}&\leq \lt(1 + \int_0^t \|u_\e(s)\|_{L^5}\,ds \rt)^4\cr
&\leq C\lt(1 + \lt(\int_0^t \|u_\e(s)\|_{H^1}^2\,ds\rt)^{1/2} \rt)^4\cr
& \leq C \quad \mbox{for} \quad t \in (0,T_*),
\end{aligned}$$
by using the same argument as in the estimate of the total energy. For the first term, we use Lemma \ref{lem_mom}; $m_0f_\e\leq C(m_1f_\e)^{3/4}$ to get $\|\rho_\e\|_{L^{5/3}} \leq C\|m_1 f_\e\|_{L^{5/4}}^{3/4}$, where $C>0$ is independent of $\e$. A similar argument as in the previous estimate then yields
$$\begin{aligned}
&\int_0^{T_*} \|u_\e(t)\|_{L^{5/2}}\|\rho_\e(t)\|_{L^{5/3}}\,dt\cr
&\hspace{1.5cm}\leq C\int_0^{T_*} \|u_\e(t)\|_{L^{5/2}}\|m_1 f_\e(t)\|_{L^{5/4}}^{3/4}\,dt\cr
&\hspace{1.5cm}\leq C\int_0^{T_*} \|u_\e(t)\|_{H^1}\lt(1 + \int_0^t \|u_\e(s)\|_{H^1}\,ds \rt)^3\,dt\cr
&\hspace{1.5cm}\leq C\int_0^{T_*} \|u_\e(t)\|_{H^1}\,dt + C\int_0^{T_*}  \|u_\e(t)\|_{H^1}\lt(\int_0^t \|u_\e(s)\|_{H^1}^2\,ds\rt)^{3/2} dt\cr
&\hspace{1.5cm}\leq C\int_0^{T_*} \|u_\e(t)\|_{H^1}\,dt\cr
&\hspace{1.5cm}\leq C\lt(\int_0^{T_*}\|u_\e(t)\|_{H^1}^2\,dt\rt)^{1/2}\cr
&\hspace{1.5cm}\leq C,
\end{aligned}$$
for $0 \leq t \leq T_*$, where $C >0$ is independent of $\e$ due to \eqref{uni_ue}. For $I_2$,
we decompose similarly as
\[
I_2 \leq \int^{T_*}_0\int_{\T^3}|\eta_\e\star u_\e| m_1f_\e \,dxdt + \int_0^{T_*} M_2 f_\e \,dt.
\]
The uniform boundedness of the second term on the right hand side is obtained in (\ref{M2f bound}). The computation for the first term is treated in \eqref{here}. This concludes the desired result.\newline
\noindent $\bullet$ {\bf Uniform bound of third moment:} We multiply (\ref{sys_reg}) by
\[
\Phi(x,v)=\frac{(1+|v|^2)^{1/2}x\cdot v}{(1+|x|^2)^{1/2}}
\]
and integrate on $\mathbb{T}^3\times\mathbb{R}^3 \times [0,T_*]$ to get
\begin{align*}
\begin{split}
-\int^{T_*}_0\int_{\T^3 \times \R^3}v\cdot \nabla_xf_\e\Phi \,dxdvdt
&=\int^{T_*}_0\int_{\T^3 \times \R^3}\partial_tf_\e\Phi \,dxdvdt\cr
&\quad +\int^{T_*}_0\int_{\T^3 \times \R^3}\nabla_v\cdot\left\{\big(\eta_\e\star u_\e-v\big)f_\e\right\}\Phi \,dxdvdt\cr
&\quad -\int^{T_*}_0\int_{\T^3 \times \R^3}\{\mathcal{M}(f_\e)-f_\e\}\Phi \,dxdvdt.
\end{split}
\end{align*}
We denote the left hand side by $L$ and the three terms on the right hand side by $R_i~ (i=1,2,3)$.\newline
\noindent $\diamond$ {\bf The estimate of $L$: } By divergence theorem, we have
\begin{align*}
\begin{split}
L&=\int^{T_*}_0\int_{\T^3 \times \R^3}f_\e v\cdot\nabla_x\Phi \,dxdvdt\cr
&=\int^{T_*}_0\int_{\T^3 \times \R^3}f_\e\{v(1+|v|^2)^{1/2}\}
\cdot\nabla_x\lt\{\frac{x\cdot v}{(1+|x|^2)^{1/2}}\rt\}dxdvdt\cr
&=\int^{T_*}_0\int_{\T^3 \times \R^3}f_\e\{v(1+|v|^2)^{1/2}\}
\cdot\lt\{\frac{v}{(1+|x|^2)^{1/2}}+\frac{-x(x\cdot v)}{(1+|x|^2)^{3/2}}\rt\}dxdvdt\cr
&=\int^{T_*}_0\int_{\T^3 \times \R^3}f_\e(1+|v|^2)^{1/2}
\cdot\lt\{\frac{|v|^2}{(1+|x|^2)^{1/2}}+\frac{-(x\cdot v)^2}{(1+|x|^2)^{3/2}}\rt\}dxdvdt\cr
&=\int^{T_*}_0\int_{\T^3 \times \R^3}f_\e\frac{|v|^2(1+|v|^2)^{1/2}}{(1+|x|^2)^{1/2}}
\lt\{1-\frac{(x\cdot v)^2}{(1+|x|^2)|v|^2}\rt\}dxdvdt.
\end{split}
\end{align*}
On the other hand, we observe
\[
1-\frac{(x\cdot v)^2}{(1+|x|^2)|v|^2}\geq 1-\frac{|x|^2|v|^2}{(1+|x|^2)|v|^2} =\frac{1}{1+|x|^2} \geq 1/4, \]
and
\[
\frac{|v|^2(1+|v|^2)^{1/2}}{(1+|x|^2)^{1/2}}\geq \frac{1}{2}|v|^3,
\]
for $(x,v) \in \T^3 \times \R^3$. This yields
\begin{align*}
L\geq \frac{1}{8}\int^{T_*}_0\int_{\mathbb{T}^3\times\mathbb{R}^3}|v|^3f_\e\,dxdvdt.
\end{align*}
\noindent$\diamond$ {\bf The estimate of $R_1$:} Since $\Phi$ does not depend on $t$, we can integrate in time as
\begin{align*}
R_1&=\int_{\mathbb{T}^3\times\mathbb{R}^3}\lt(f_\e({T_*})-f_\e(0)\rt)\Phi \,dxdv\cr&\leq \int_{\mathbb{T}^3\times\mathbb{R}^3}
\lt(f_\e({T_*})-f_\e(0)\rt)(1+|v|^2)\,dxdv\cr
& \leq C,
\end{align*}
where we used $\Phi(x,v)\leq (1+|v|^2)$ for $(x,v) \in \T^3 \times \R^3$ and \eqref{uni_te}, and the constant $C>0$ is independent of $\e$. \newline
\noindent$\diamond$ {\bf The estimate of $R_2$:} Using divergence theorem, we estimate
\begin{align*}
R_2&=\int^{T_*}_0\int_{\T^3 \times \R^3}\nabla_v\cdot\left\{\big(\eta_\e\star u_\e-v\big)f_\e\right\}\frac{(1+|v|^2)^{1/2}x\cdot v}{(1+|x|^2)^{1/2}}\,dxdvdt\cr
&=-\int^{T_*}_0\int_{\mathbb{T}^3\times\mathbb{R}^3}\left\{\big(\eta_\e\star u_\e-v\big)f_\e\right\}\cdot\nabla_v\lt\{\frac{(1+|v|^2)^{1/2}x\cdot v}{(1+|x|^2)^{1/2}}\rt\}dxdvdt\cr
&=-\int^{T_*}_0\int_{\mathbb{T}^3\times\mathbb{R}^3}\left\{\big(\eta_\e\star u_\e-v\big)f_\e\right\}
(1+|v|^2)^{1/2}\lt\{\frac{v\{x\cdot v\}+x(1+|v|^2)}{(1+|v|^2)(1+|x|^2)^{1/2}}\rt\}dxdvdt.
\end{align*}
Note that
\begin{align*}
\lt|\frac{v\{x\cdot v\}+x(1+|v|^2)}{(1+|v|^2)(1+|x|^2)^{1/2}}\rt|\leq 2 \quad \mbox{for} \quad (x,v) \in \T^3 \times \R^3,
\end{align*}
which gives
\begin{align*}
|R_2|\leq 2\int^{T_*}_0\int_{\T^3 \times \R^3}|\eta_\e\star u_\e-v|f_\e(1+|v|)\,dxdvdt = 2\|(\eta_\e \star u_\e - v)(1+|v|)f_\e\|_{L^1}  \leq C,
\end{align*}
where we used \eqref{est_fk}. \newline
\noindent$\diamond$ {\bf The estimate of $R_3$:} A straightforward computation gives
$$\begin{aligned}
R_3& \leq\int^{T_*}_0\int_{\T^3 \times \R^3}\big\{\mathcal{M}(f_\e)+f_\e\big\}(1+|v|^2)\,dxdvdt\cr
&= 2\int^{T_*}_0\int_{\T^3 \times \R^3} f_\e(1 + |v|^2) \,dxdvdt \cr
&\leq C_{f_0,u_0,T_*}.
\end{aligned}$$
Combining all these estimates, we obtain
\[
\int_0^{T_*}\int_{\mathbb{T}^3 \times\mathbb{R}^3}f_\e|v|^3\,dxdvdt \leq C_{f_0,u_0,T_*}.
\]
\noindent $\bullet$ {\bf Uniform bound of entropy:} Multiply  $\eqref{sys_reg}_1$ by $\ln f_\e$ and integrate with respect to $x$ and $v$ to get
\begin{align*}
&\frac{d}{dt} \int_{\mathbb{T}^3\times\mathbb{R}^3} f_\e\ln f_\e \,dxdv + \int_{\mathbb{T}^3\times\mathbb{R}^3} \lt(v\cdot\nabla_xf_\e\rt)\ln f_\e \,dxdv+\int_{\mathbb{T}^3\times\mathbb{R}^3} \nabla_v \cdot ((\eta_\e \star u_\e-v)f_\e)\ln f_\e \,dxdv\cr
 &\hspace{1cm}= \int_{\mathbb{T}^3\times\mathbb{R}^3} \left
(\mathcal{M}(f_\e) - f_\e\right)\ln f_\e \,dxdv.
\end{align*}
The second term on the left hand side vanishes due to the divergence theorem.  Using divergence theorem and integration by parts, we can estimate the third term on the left hand side as
\begin{align*}
\int_{\mathbb{T}^3\times\mathbb{R}^3} \nabla_v \cdot ((\eta_\e \star u_\e-v)f_\e)\ln f_\e \,dxdv &= -\int_{\mathbb{T}^3\times\mathbb{R}^3} (\eta_\e \star u_\e-v)\nabla_v f_\e \,dxdv \cr &=-3\int_{\mathbb{T}^3\times\mathbb{R}^3} f_\e \,dxdv.
\end{align*}
Since the local Maxwellian shares the same moments up to second order with $f_\e$, we get
\begin{align*}
\int_{\mathbb{T}^3\times\mathbb{R}^3} \left\{\mathcal{M}(f_\e) - f_\e\right\}\ln \mathcal{M}(f_\e)\,dxdv
&=\int_{\mathbb{T}^3\times\mathbb{R}^3} \left\{\mathcal{M}(f_\e) - f_\e\right\}\left\{\ln \frac{\rho_\e}{\sqrt{(2\pi T_\e)^3}}-\frac{|v-U_\e|^2}{2T_\e}\right\}dxdv\cr
&=0,
\end{align*}
which immediately gives
\[
\int_{\mathbb{T}^3\times\mathbb{R}^3} \left\{\mathcal{M}(f_\e) - f_\e\right\}\ln f_\e \,dxdv=-\int_{\mathbb{T}^3\times\mathbb{R}^3} \left\{\mathcal{M}(f_\e) - f_\e\right\}\left\{\ln \mathcal{M}(f_\e)-\ln f_\e\right\}dxdv \leq 0.
\]
Thus, we obtain
\begin{align*}
\frac{d}{dt} \int_{\mathbb{T}^3\times\mathbb{R}^3} f_\e\ln f_\e \,dxdv -3\int_{\mathbb{T}^3\times\mathbb{R}^3} f_\e \,dxdv=-\int_{\mathbb{T}^3\times\mathbb{R}^3} \left\{\mathcal{M}(f_\e) - f_\e\right\}\lt(\ln \mathcal{M}(f_\e)-\ln f_\e\rt) dxdv.
\end{align*}
Integrating in time, we get
$$\begin{aligned}
&\int_{\mathbb{T}^3\times\mathbb{R}^3} f_\e(t)\ln f_\e(t) \,dxdv+\int^t_0\int_{\mathbb{T}^3\times\mathbb{R}^3} \left\{\mathcal{M}(f_\e) - f_\e\right\}\lt(\ln \mathcal{M}(f_\e)-\ln f_\e\rt) dxdvds \cr
&\quad \leq \int_{\mathbb{T}^3\times\mathbb{R}^3} f_{0,\e}\ln f_{0,\e}\,dxdv + 3M_0f_0 T \quad \mbox{for} \quad t \in (0,T).
\end{aligned}$$
Then, it is standard to show that (see for example, \cite{CIP,DL})
\[
\sup_{0 \leq t \leq T}\int_{\mathbb{T}^3\times\mathbb{R}^3} f_\e(t)|\ln f_\e(t)| \,dxdv\leq C(f_0,T).
\]
This completes the proof.
%
%
%
%
%
%
\section{Global existence of weak solutions}\label{sec_gl_weak}
\subsection{Weak compactness  of $f_\e$ and $\mathcal{M}(f_\e)$} In this part, we use the uniform estimates in $\e$ obtained in the previous subsection to derive compactness of $(f_\e,u_\e)$ and the relaxation operators.

We have derived in the previous section that there exists a constant $C$, independent of $\e$ such that
\begin{eqnarray*}
\int^{T_*}_0\int_{\T^3 \times \R^3}(1+|v|^3+|\ln f_{\e}|)f_{\e} \,dxdvdt \leq C.
\end{eqnarray*}
Dunford-Pettis theorem  then implies that $f_{\e}$, $f_{\e}v$ and $f_{\e}|v|^2$ are  weakly compact in $L^1(\T^3\times\R^3 \times (0,{T_*}))$. To derive the weak compactness of $\mathcal{M}(f_\e)$, we
 compute for  $R>1$
\begin{align*}
\mathcal{M}(f)-f&=\big\{\mathcal{M}(f)-f\big\}1_{\mathcal{M}(f)<Rf}+\big\{\mathcal{M}(f)-f\big\}1_{\mathcal{M}(f)\geq Rf}\cr
&\leq(R-1)f1_{\mathcal{M}(f)<Rf} +\frac{1}{\ln R}\big(\mathcal{M}(f)-f\big)\big(\ln\mathcal{M}(f)-\ln f\big)1_{\mathcal{M}(f)\geq Rf},
\end{align*}
so that
\begin{align*}
\mathcal{M}(f)\leq Rf
+\frac{1}{\ln R}\big(\mathcal{M}(f)-f\big)\big(\ln\mathcal{M}(f)-\ln f\big).
\end{align*}
Now, we take an arbitrary measurable set $B_{x,v} \subseteq\mathbb{T}^3 \times \R^3$ and integrate over $B_{x,v}\times [0,{T_*}]$ to get
$$\begin{aligned}
&\int^{T_*}_0\int_{B_{x,v}}\mathcal{M}(f)\,dxdvdt\cr
&\hspace{1.2cm}\leq R\int^{T_*}_0\int_{B_{x,v}}f\,dxdvdt+\frac{1}{\ln R}\int^{T_*}_0\int_{ B_{x,v}}\big(\mathcal{M}(f)-f\big)\big(\ln\mathcal{M}(f)-\ln f\big)\,dvdxdt\cr
&\hspace{1.2cm}\leq R\int^{T_*}_0\int_{B_{x,v}}f\,dxdvdt+\frac{1}{\ln R}\int^{T_*}_0\int_{ \mathbb{T}^3 \times \mathbb{R}^3}\big(\mathcal{M}(f)-f\big)\big(\ln\mathcal{M}(f)-\ln f\big)\,dvdxdt\cr
&\hspace{1.2cm}\leq R\int^{T_*}_0\int_{B_{x,v}} f\,dxdvdt+\frac{1}{\ln R}\lt(\int_{\mathbb{T}^3 \times\mathbb{R}^3}f_0\ln f_0\,dxdv + 3 M_0 f_0 T\rt)\cr
&\hspace{1.2cm}\leq R\int^{T_*}_0\int_{B_{x,v}} f\,dxdvdt+\frac{1}{\ln R}\left(\int_{\mathbb{T}^3 \times\mathbb{R}^3}f_0|\ln f_0|dxdv+C_{f_0,T}\right).
\end{aligned}$$
Then, Dunford-Pettis theorem again gives the weak compactness of $\mathcal{M}(f)$ in $L^1( \mathbb{T}^3\times \mathbb{R}^3 \times (0,{T_*}))$.
%
%
%
%
\subsection{Strong compactness of $\rho_\e$, $U_\e$ and $T_\e$}
From the argument in the previous section, we see that there exists $f\in L^1( \mathbb{T}^3\times \mathbb{R}^3 \times (0,{T_*}))$ such that $f_{\e}$, $f_{\e}v$, $f_{\e}|v|^2$   converge to $f$, $fv$, $f|v|^2$ weakly in
$L^1( \mathbb{T}^3\times \mathbb{R}^3 \times (0,{T_*}))$
respectively, which also implies
\[
\rho_\e = \int_{\mathbb{R}^3} f_\e \,dv \rightharpoonup \int_{\mathbb{R}^3} f \,dv = \rho, \quad \rho_\e U_\e = \int_{\mathbb{R}^3} vf_\e \,dv \rightharpoonup \int_{\mathbb{R}^3} vf \,dv = \rho U,
\]
and
\[
3\rho_{\e}T_{\e}+\rho_{\e}|U_\e|^2 =\int_{\mathbb{R}^3}f_{\e}|v|^2dv \rightharpoonup \int_{\mathbb{R}^3}f|v|^2dv =3\rho T+\rho|U|^2
\]
 in $L^1(\mathbb{T}^3 \times (0,{T_*}))$.
Thanks to the velocity averaging lemma \cite{G-L-P-S},  the above convergence actually is strong, which gives the almost everywhere
convergence of the macroscopic fields:
\bq\label{ae}
\rho_{\e} \rightarrow \rho \quad \mbox{a.e on } \mathbb{T}^3\times [0,{T_*}], \quad
U_{\e } \rightarrow U \quad \mbox{a.e on } E, \quad \mbox{and} \quad
T_{\e}\rightarrow T \quad \mbox{a.e on }E,
\eq
where $E=\{(x,t)\in\mathbb{T}^3\times (0,{T_*})\, | \,\rho(x,t)\neq0\}$.
Next, we need to show that $\mathcal{M}(f_{\e})$ converges weakly in $L^1$ to $\mathcal{M}(f)$.
\subsection{$\mathcal{M}(f_\e)$ converges to $\mathcal{M} (f)$ in $L^1(\T^3\times\R^3 \times (0,{T_*}))$}
Since (\ref{ae}) implies
\begin{align*}
\mathcal{M}(\rho_{\e},U_{\e},T_{\e})\varphi\rightarrow  \mathcal{M}(\rho,U,T)\varphi \mbox{ a.e on } E\times\mathbb{R}^3
\end{align*}
for any non-negative $L^{\infty}$ function $\varphi$, we have from Fatou's lemma  that
\begin{align*}
\int_{E\times \mathbb{R}^3}\mathcal{M}(\rho,U,T)\varphi \,dxdvdt&\leq
\lim_{\e\rightarrow0}\int_{E\times \mathbb{R}^3}\mathcal{M}(\rho_{\e},U_{\e},T_{\e})\varphi \,dxdvdt.
\end{align*}
On the other hand, from the weak $L^1$ compactness of $\mathcal{M}(f_{\e})$, we can find a $L^1$ function $M$ such that
\begin{align*}
\lim_{\e\rightarrow0}\int_{E\times \mathbb{R}^3}\mathcal{M}(\rho_{\e},U_{\e},T_{\e})\varphi \,dxdvdt
=\int_{E\times \mathbb{R}^3} M \varphi \,dxdvdt.
\end{align*}
Thus we obtain
\begin{align*}
\int_{E\times \mathbb{R}^3}\mathcal{M}(\rho,U,T)\varphi \,dxdvdt\leq \int_{E\times \mathbb{R}^3} M \varphi \,dxdvdt,
\end{align*}
for all $\varphi \in L^{\infty}(\T^3 \times \R^3 \times (0,T_*))$, from which we can conclude that
\begin{align}\label{less than M}
\mathcal{M}(\rho,U,T)\leq M.
\end{align}
almost everywhere on $E\times \mathbb{R}^3$. Now, taking $\varphi=1$, we find
\begin{align*}
\int_{E\times \mathbb{R}^3}M\,dxdvdt
&=\lim_{\e\rightarrow0}\int_{E\times \mathbb{R}^3}\mathcal{M}(\rho_{\e},U_{\e},T_{\e})\,dxdvdt \cr &=\lim_{\e\rightarrow0}\int_{E}\rho_{\e}\,dxdt\cr
&=\int_{E}\rho \,dxdt\cr
& =\int_{E\times \mathbb{R}^3}\mathcal{M}(\rho,U,T)\,dxdvdt.
\end{align*}
This, together with (\ref{less than M}) implies $\mathcal{M}(\rho,U,T)= M$ almost everywhere on $E$. On the other hand, we observe
\begin{align*}
\lt|\lim_{\e\rightarrow 0}\int_{E^c\times \mathbb{R}^3}\mathcal{M}_\e\varphi \,dxdvdt\rt|
&\leq \lim_{\e\rightarrow 0}\int_{E^c\times \mathbb{R}^3}\mathcal{M}_\e|\varphi|\,dxdvdt\cr
&\leq \lim_{\e\rightarrow 0}\|\varphi\|_{L^{\infty}}\int_{E^c\times \mathbb{R}^3}\mathcal{M}_\e \,dxdvdt\cr
&= \lim_{\e\rightarrow 0}\|\varphi\|_{L^{\infty}}\int_{E^c}\rho_\e \,dxdt\cr
&=\|\varphi\|_{L^{\infty}}\int_{E^c}\rho \,dxdt\cr
&=0.
\end{align*}
Hence we obtain
\[
\lim_{\e\rightarrow 0}\int_{E^c\times \mathbb{R}^3}M\,dxdvdt=0.
\]
In conclusion, we have
\begin{align*}
\lim_{\e\rightarrow0}\int^{T_*}_0\!\!\int_{\mathbb{T}^3\times \mathbb{R}^3}\mathcal{M}(f_\e)\varphi \,dxdvdt
&=\lim_{\e\rightarrow0}\int_{E\times \mathbb{R}^3}\mathcal{M}(f_\e)\varphi \,dxdvdt
+\lim_{\e\rightarrow0}\int_{E^c\times \mathbb{R}^3}\mathcal{M}(f_\e)\varphi \,dxdvdt\cr
&=\int_{E\times \mathbb{R}^3}\mathcal{M}(f)\varphi \,dxdvdt
+0\cr
&=\int_0^{T_*}\!\!\int_{\mathbb{T}^3\times \mathbb{R}^3}\mathcal{M}(f)\varphi \,dxdvdt.
\end{align*}
This provides the desired result.

%
%
%
%

\subsection{Compactness of $u_\e$ in $L^2(0,T_*;L^2(\T^3))$}
In this subsection, we show that $u_\e$ is compact in $L^2(0,T_*;L^2(\T^3))$. 
For this, we are going to show that $\pa_t u_\e$ is uniformly bounded in $L^{3/2}(0,T_*;\mv')$.

It follows from the weak formulation for the fluid part that for all $\psi \in \mc^1(\T^3 \times [0,T_*])$ with $\nabla_x \cdot \psi = 0$ for almost everywhere $t$
$$\begin{aligned}
\int_0^t \int_{\T^3} \pa_t u_\e \cdot \psi \,dxds &= - \int_0^t \int_{\T^3} \lt( (\eta_\e \star u_\e) \cdot \nabla_x \rt)u_\e\cdot \psi\,dxds - \mu\int_0^t \int_{\T^3} \nabla_x u_\e : \nabla_x \psi\,dxds\cr
&\quad -\int_0^t \int_{\T^3 \times \R^3} f_\e (u_\e - v)\gamma_\e(v) \cdot \psi\,dxdvds\cr
&=: J_1 + J_2 + J_3.
\end{aligned}$$
Using the integration by parts together with the divergence free condition, we get
\[
J_1 = \int_0^t \int_{\T^3} \lt((\eta_\e \star u_\e) \cdot \nabla_x \rt) \psi \cdot u_\e \,dxds,
\]
and then we estimate it as
$$\begin{aligned}
\lt| J_1 \rt| &\leq \int_0^t \|\nabla_x \psi\|_{L^2} \| |\eta_\e \star u_\e | | u_\e|\|_{L^2}\,ds\cr
& \leq \int_0^t \|\nabla_x \psi\|_{L^2}\| u_\e\|_{L^4}^2\,ds \cr
&\leq \|\nabla_x \psi\|_{L^3(0,T_*; L^2)}\|u_\e\|_{L^3(0,T_*;L^4)}^2,
\end{aligned}$$
where $\|u_\e\|_{L^3(0,T_*;L^4)}$ is uniformly bounded in $\e$ since $u_\e$ is uniformly bounded in $L^\infty(0,T_*;L^2(\T^3))$ $\cap L^2(0,T_*;$ $H^1(\T^3))$ and  the Sobolev embedding 
\[
L^\infty(0,T_*;L^2(\T^3)) \cap L^2(0,T_*;H^1(\T^3)) \hookrightarrow L^3(0,T_*;L^4(\T^3)).
\] 
Thus we obtain
\[
\psi \mapsto- \int_0^t \int_{\T^3} \lt( (\eta_\e \star u_\e) \cdot \nabla_x \rt)u_\e\cdot \psi\,dxds
\]
is bounded in $L^{3/2}(0,T_*;\mv')$. The estimate of $J_2$ can be easily done as
\[
\lt| J_2 \rt| \leq \mu \int_0^t \|\nabla_x u_\e\|_{L^2} \|\nabla_x \psi\|_{L^2}\,ds \leq  \|\nabla_x \psi\|_{L^3(0,T_*; L^2)} \|\nabla_x u_\e\|_{L^{3/2}(0,T_*;L^2)}.
\]
Thus it gives the same result as the above. Finally, we estimate $J_3$ as
$$\begin{aligned}
\lt| J_3 \rt| &\leq \int_0^t \int_{\T^3 \times \R^3} f_\e \lt(|u_\e| + |v| \rt) |\psi|\,dxdvds \cr
&\leq \int_0^t \lt( \|u_\e\|_{L^6}\|\psi\|_{L^6}\|\rho_\e\|_{L^{3/2}} + \|\psi\|_{L^5}\|m_1 f_\e\|_{L^{5/4}}\rt)\,ds\cr
&\leq \|u_\e\|_{L^2(0,T_*;L^6)}\|\psi\|_{L^2(0,T_*; L^6)}\|\rho_\e\|_{L^\infty(0,T_*;L^{3/2})} + \|\psi\|_{L^2(0,T_*;L^5)}\|m_1 f_\e\|_{L^2(0,T_*;L^{5/4})}.
\end{aligned}$$
On the other hand, it follows from Lemma \ref{lem_mom}; $m_0f\leq C_{\|f_\e\|_{L^{\infty}}}(m_2f)^{3/5}$ and H\"older inequality that
\begin{align*}
\|\rho_{f_\e}\|_{L^{3/2}} &\leq C\lt(\int_{\T^3} (m_2 f_\e)^{9/10} dx \rt)^{2/3}\cr
&\leq \left(\int_{\T^3} \left\{(m_2f_\e)^{9/10}\right\}^{10/9}dx\right)^{9/10}\left(\int_{\T^3} 1^{10}dx\right)^{1/10}\cr
&\leq C\lt(M_2 f_\e\rt)^{3/5}.
\end{align*}
Thus we get the uniform boundedness of $\|\rho_{f_\e}\|_{L^\infty(0,T_*;L^{3/2})}$ in $\e$. Similarly, we find
\[
\|m_1 f_\e\|_{L^{5/4}} \leq C(M_2 f_\e)^{4/5},
\]
i.e., $m_1 f_\e$ is uniformly bounded in $L^2(0,T_*; L^{5/4}(\T^3))$. Combined with the uniform boundedness of $\|u_\e\|_{L^2(0,T_*;L^6)}$ in $\e$, this yields
\[
\lt| J_3 \rt| \leq C\|\psi\|_{L^2(0,T_*; L^6)} \leq C\|\psi\|_{L^2(0,T_*; H^1)}.
\]
Thus we obtain that $\pa_t u_\e$ is uniformly bounded in $L^{3/2}(0,T_*;\mv')$. Then, by Aubin-Lions lemma, we have the following strong convergences of $u_\e$:
$$\begin{aligned}
u_\e \to u \quad \mbox{in} \quad L^2(0,T_*;L^2(\T^3)), \quad u_\e \to u \quad \mbox{in} \quad \mc([0,T_*];\mv'),
\end{aligned}$$
as $\e \to 0$. These convergence together with the weak convergences allow us to pass to the limit to conclude the existence of weak solutions.

In order to extend that local-in-time weak solutions to the global ones, we give the following energy estimate showing the total energy of the system \eqref{main_sys} is not increasing. Then, by using the same strategy based on the continuity argument as in \cite[Section 3.6]{BDGM}, we have the global-in-time existence of weak solutions and complete the proof of Theorem \ref{thm_main}. Even though the proof of following lemma is almost same with \cite[Lemma 2]{BDGM}, for the completeness and the readers' convenience, we provide its details in Appendix \ref{app_lem_glo}.

\begin{lemma}\label{lem_glo}Let $(f,u)$ be the solutions to the system \eqref{main_sys} obtained above. Then we have the following total energy estimate
\begin{align*}
\begin{aligned}
&\frac12M_2f(t) + \frac12\|u(t)\|_{L^2}^2 + \mu\int_0^t\|\nabla u(s)\|_{L^2}^2 ds\cr
&\hspace{2cm} + \int_0^t \int_{\T^3 \times \R^3} f|u - v|^2 dxdv ds \leq \frac12M_2f_0 + \frac12\|u_0\|_{L^2}^2
\end{aligned}
\end{align*}
for almost every $t \in [0,T_*]$.
\end{lemma}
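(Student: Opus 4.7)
\noindent\textbf{Proof proposal for Lemma \ref{lem_glo}.} The natural strategy is to derive the total energy identity at the regularized level for $(f_\e,u_\e)$ and then pass to the limit $\e \to 0$ using the compactness results from Sections 5 and 6, converting the equality into the desired inequality via weak lower semicontinuity.

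At the regularized level, I would test $\eqref{sys_reg}_1$ against $|v|^2/2$ and integrate. The transport term vanishes on the torus; integrating the drag term by parts in $v$ yields $\int((\eta_\e\star u_\e)\cdot v - |v|^2)f_\e\,dxdv$; and the BGK term vanishes identically thanks to the cancellation property $\int(\mathcal{M}(f_\e)-f_\e)|v|^2\,dv=0$. Testing $\eqref{sys_reg}_2$ against $u_\e$ gives the standard Navier--Stokes identity, with the convection term dropping by $\nabla_x\cdot(\eta_\e\star u_\e)=0$. Adding the two and rearranging so that the target dissipation $\int f_\e|u_\e-v|^2$ appears explicitly on the left produces
\begin{align*}
\tfrac{1}{2}\tfrac{d}{dt}\bigl(M_2 f_\e + \|u_\e\|_{L^2}^2\bigr) + \mu\|\nabla_x u_\e\|_{L^2}^2 + \int_{\T^3\times\R^3} f_\e|u_\e-v|^2\,dxdv = R_\e(t),
\end{align*}
with remainder
\begin{align*}
R_\e(t) = \int(\eta_\e\star u_\e - u_\e)\cdot v\, f_\e\,dxdv + \int(1-\gamma_\e(v))\bigl(|u_\e|^2 - u_\e\cdot v\bigr)f_\e\,dxdv.
\end{align*}

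I would then show $\int_0^{T_*}|R_\e|\,dt\to 0$. The first piece is controlled via $\|\eta_\e\star u_\e - u_\e\|_{L^2(0,T_*;L^5)}\|m_1 f_\e\|_{L^2(0,T_*;L^{5/4})}$, where the second factor is uniform in $\e$ by Lemma \ref{lem_mom} and the energy bound \eqref{uni_te}, while the first vanishes by the standard mollifier property together with the Sobolev embedding $H^1(\T^3)\hookrightarrow L^5(\T^3)$. The second piece is supported on $\{|v|\geq 1/(2\e)\}$ and is dominated by $\int_{|v|\geq 1/(2\e)}(1+|u_\e|^2)(1+|v|^2)f_\e\,dxdv$, which tends to zero by dominated convergence thanks to the uniform third-moment bound from Proposition \ref{prop_ext_reg}.

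For the passage to the limit on the left-hand side, every term is nonnegative, so weak lower semicontinuity handles $\|u(t)\|_{L^2}^2$, $M_2f(t)$, and $\int_0^t\|\nabla u\|_{L^2}^2\,ds$ directly; the initial data terms pass via $u_{0,\e}\to u_0$ in $L^2$ and $M_2f_{0,\e}\to M_2 f_0$. The main obstacle is the dissipation $\int_0^t\int f_\e|u_\e-v|^2$, a product of only weakly convergent pieces. I would expand the square as
\begin{align*}
\int f_\e|u_\e-v|^2\,dxdv = \int \rho_\e|u_\e|^2\,dx - 2\int u_\e\cdot(\rho_\e U_\e)\,dx + \int f_\e|v|^2\,dxdv
\end{align*}
and exploit the strong convergences obtained in Section 6: $\rho_\e\to\rho$ and $\rho_\e U_\e\to\rho U$ from the velocity-averaging lemma, combined with the strong $L^2(0,T_*;L^2)$ convergence of $u_\e$ from Aubin--Lions. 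Fatou's lemma applied to the pointwise-a.e.\ convergent sequence $\rho_\e|u_\e|^2$ yields the required inequality for that term, the cross term passes by strong-times-strong convergence, and the $|v|^2 f_\e$ term passes via weak $L^1$ convergence tested against the constant $1$. Assembling these ingredients and taking $\liminf_{\e\to 0}$ gives the claimed estimate for a.e.\ $t \in [0,T_*]$.
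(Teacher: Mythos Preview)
Your outline follows the same overall route as the paper: derive an exact energy identity for $(f_\e,u_\e)$ with a remainder $R_\e$, show $R_\e\to 0$, then pass to the limit by weak lower semicontinuity and the strong compactness of $u_\e$. The paper's remainder splits into the same three pieces you identify. Two of your justifications, however, are too loose, and one of them does not close as stated.

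For the mollifier piece $\int(\eta_\e\star u_\e-u_\e)\cdot v\,f_\e$, your appeal to ``the standard mollifier property'' is not immediate because $u_\e$ itself varies with $\e$; the statement $\eta_\e\star g\to g$ applies to a fixed $g$. The claim can be rescued via $\|\eta_\e\star g-g\|_{L^2}\le C\e\|\nabla g\|_{L^2}$ and interpolation with the uniform $L^2(0,T_*;L^6)$ bound, but this should be said. The paper takes a different route: it inserts a second cutoff $\gamma_\delta(v)$, so that on $\{|v|\le 1/\delta\}$ the factor $|v|$ is bounded and one only needs $\|u_\e-\eta_\e\star u_\e\|_{L^1(\T^3\times(0,T_*))}\to 0$, which follows from the Aubin--Lions strong $L^2$ convergence of $u_\e$; the tail $\{|v|\ge 1/(2\delta)\}$ is then treated like the cutoff term and sent to zero afterwards by letting $\delta\to 0$.

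For the cutoff piece your argument has a genuine gap. Invoking ``dominated convergence thanks to the third-moment bound'' does not work for the term $\int_{\{|v|\ge 1/(2\e)\}}|u_\e|^2 f_\e\,dxdv$: to pair against $|u_\e|^2\in L^3_x$ you need $\int_{\{|v|\ge 1/(2\e)\}}f_\e\,dv$ in $L^{3/2}_x$, and the third-moment bound (only $L^1$ in $(t,x,v)$) does not supply this. The paper instead applies Lemma~\ref{lem_mom} to $h_\e:=f_\e(1-\gamma_\e)$ to obtain $\|m_0 h_\e\|_{L^{3/2}_x}\le C(M_{3/2}h_\e)^{2/3}$ and $\|m_1 h_\e\|_{L^{6/5}_x}\le C(M_{9/5}h_\e)^{5/6}$, and then extracts the smallness from the \emph{second} moment, which is uniform in $t$: $M_{3/2}h_\e\le (2\e)^{1/2}M_2 f_\e$ and $M_{9/5}h_\e\le (2\e)^{1/5}M_2 f_\e$. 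That combination of moment interpolation and the $t$-uniform second moment is the missing ingredient.
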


%
%
%
%

\section*{Acknowledgments}
Young-Pil Choi is supported by National Research Foundation of Korea(NRF) grants funded by the Korea government(MSIP) (No. 2017R1C1B2012918 and 2017R1A4A1014735) and POSCO Science Fellowship of POSCO TJ Park Foundation. Seok-Bae Yun is supported by Basic Science Research Program through NRF funded by the Ministry of Education(No. 2016R1D1A1B03935955).

%
%
%
%
\appendix

\section{Proof of Lemma \ref{lem_glo}}\label{app_lem_glo}

A straightforward computation yields
\begin{align*}
\begin{aligned}
&\frac12M_2f_\e(t) + \frac12\|u_\e(t)\|_{L^2}^2 + \mu\int_0^t\|\nabla u_\e(s)\|_{L^2}^2 ds\cr
&\hspace{1.2cm} + \int_0^t \int_{\T^3 \times \R^3} f_\e|u_\e - v|^2 \,dxdv ds = \frac12M_2f_0^\e + \frac12\|u_0^\e\|_{L^2}^2 + R_\e(t),
\end{aligned}
\end{align*}
where the remnant $R_\e(t)$ is given by
\begin{align*}
\begin{aligned}
R_\e(t) &= \int_0^t \int_{\T^3 \times \R^3} f_\e |u_\e|^2(1 - \gamma_\e(v))\,dx dv ds  - \int_0^t\int_{\T^3 \times \R^3} f_\e u_\e\cdot v(1 - \gamma_\e(v))\,dxdv ds\cr
& \quad + \int_0^t \int_{\T^3 \times \R^3} f_\e (u_\e - \eta_\e \star u_\e)\cdot v \,dx dv ds\cr
& =: R_\e^1 + R_\e^2 + R_\e^3.
\end{aligned}
\end{align*}
We now show $R_\e(t) \to 0$ as $\e \to 0$ uniformly in $t \in [0,T_*]$.

\noindent $\bullet$ {\bf Estimate of $R_\e^1(t)$:} Set $h_\e(x,t) := \int_{\R^3}f_\e(x,v,t)(1 - \gamma_\e(v))\,dv$. Then we use Lemma \ref{lem_mom} to obtain
\begin{align*}
\begin{aligned}
|R_\e^1(t)| &\leq \int_0^t \int_{\T^3} |u_\e|^2|h_\e|dxds \leq \int_0^t \|u_\e\|_{L^6}^2\|m_0h_\e\|_{L^{3/2}} ds\cr
&\leq C\int_0^t\|u_\e\|_{H^1}^2|M_{3/2}h_\e|^{3/2}ds \cr
&\leq C\|M_{3/2}h_\e\|_{L^\infty(0,T_*;L^{3/2})}^{3/2}\|u_\e\|_{L^2(0,T_*;H^1)}^2.
\end{aligned}
\end{align*}
On the other hand, we find
\begin{align*}
\begin{aligned}
|M_{3/2}h_\e(t)| &\leq \int_{\T^3 \times \R^3}|v|^{3/2}f_\e(1 - \gamma_\e) dx dv\cr
&\leq \int_{\T^3 \times \{v:|v| \geq \frac{1}{2\e}\}}|v|^{3/2}f_\e dx dv\cr
&\leq \sqrt{2\e}\int_{\T^3 \times \R^3}|v|^2f_\e dx dv \leq C\sqrt{\e}.
\end{aligned}
\end{align*}
Thus we have
\[
|R_\e^1(t)| \leq C\sqrt\e \to 0 \quad \mbox{as } \e \to 0.
\]
\newline
$\bullet$ {\bf Estimate of $R_\e^2(t)$:} Taking a similar argument as the above, we get
\begin{align*}
\begin{aligned}
|R_\e^2(t)| &\leq \int_0^t\int_{\T^3}|u_\e||m_1h_\e|dxds \leq \int_0^t \|u_\e\|_{L^6}\|m_1h_\e\|_{L^{6/5}} ds\cr
&\leq \int_0^t\|u_\e\|_{H^1}|M_{9/5}h_\e|^{5/6}ds\cr
&\leq \sqrt{T}\|u_\e\|_{L^2(0,T_*;H^1)}\|M_{9/5}h_\e\|_{L^\infty(0,T_*;L^{5/3})}^{5/6} \cr
&\leq C\e^{1/5} \to 0 \quad \mbox{as } \e \to 0,
\end{aligned}
\end{align*}
where we used
\[
|M_{9/5}h_\e(t)| \leq \int_{\T^3 \times \{v:|v|\geq \frac{1}{2\e}\}}|v|^{9/5}f_\e \,dx dv \leq (2\e)^{1/5}M_2f_\e(t) \leq C\e^{1/5}.
\]
\newline
$\bullet$ {\bf Estimate of $R_\e^3(t)$:} We again divide it into two terms $R_{\e,\delta}^{3,i},i=1,2$ as follows.
\begin{align*}
\begin{aligned}
R_\e^3(t) &= \int_0^t\int_{\T^3 \times \R^3} f_\e(u_\e - \eta_\e \star u_\e)\cdot v (1 - \gamma_\delta(v))\,dx dv ds\cr
& + \int_0^t\int_{\T^3 \times \R^3}f_\e(u_\e - \eta_\e \star u_\e) \cdot v \gamma_\delta(v)\,dx dv ds\cr
&=: R_{\e,\delta}^{3,1}(t) + R_{\e,\delta}^{3,2}(t),
\end{aligned}
\end{align*}
for any $\delta > 0$. First, we easily find that $|R_{\e,\delta}^{3,1}(t)| \leq C\delta^{1/5} \to 0$ as $\delta \to 0$  uniformly in $\e$ using the same argument as the above. For the estimate $R_{\e,\delta}^{3,2}$, we use the uniform bound estimate of $f_\e$ in $L^\infty(\T^3 \times\R^3 \times (0,T_*))$ to get
$$\begin{aligned}
|R_{\e,\delta}^{3,2}(t)| &\leq \int_0^t \int_{\T^3 \times \{v:|v|\leq \frac1\delta\}} |u_\e - \eta_\e\star u_\e||f_\e||v|dxdv ds  \leq C_\delta\|f_\e\|_{L^\infty}\|u_\e - \eta_\e\star u_\e\|_{L^1(0,T_*;L^1)}.
\end{aligned}$$
Then since $u_\e \to u$ in $L^2(0,T_*;L^2_{loc}(\T^3))$ we obtain
\[
|R_{\e,\delta}^{3,2}(t)| \to 0 \quad \mbox{as} \quad \e \to 0.
\]
Thus we first let $\e \to 0$ to have $|R_\e^1(t)| + |R_\e^2(t)| + |R_\e^3(t)|\leq C\delta^{1/5}$ for all $\delta>0$, and then let $\delta \to 0$ to have $R_\e \to 0$ as $\e \to 0$ uniformly in $t \in [0,T_*]$. We next use the weak$-\star$ convergence of $f_\e$ in $L^\infty(\T^3 \times \R^3 \times (0,T_*))$ to get
\[
M_2f(t) \leq \liminf_{\e \to 0}M_2 f_\e(t) \quad \mbox{for almost every } t \in [0,T_*].
\]
Using that idea together with the strong convergence of $u_\e$ in $L^2(\T^3 \times (0,T_*))$, we can also deal with the terms $\int_0^t\int_{\T^3 \times \R^3}f_\e|u_\e - v|^2\,dxdvds$, $\|u_\e\|_{L^2(\T^3)}$, and $\int_0^t \|\nabla_x u_\e(s)\|_{L^2}^2\,ds$. This completes the proof.
%
%
%
%

\end{document}